\documentclass[12pt]{article}

\usepackage{amsmath,amssymb,amsthm}
\usepackage{graphicx,subfigure}
\usepackage[scriptsize]{caption}

\newtheorem{thm}{Theorem}
\newtheorem{cor}[thm]{Corollary}
\newtheorem{lem}[thm]{Lemma}
\newtheorem{prop}[thm]{Proposition}

\theoremstyle{remark}
\newtheorem{remark}[thm]{Remark}

\theoremstyle{definition}
\newtheorem{defn}[thm]{Definition}
\newtheorem*{defn*}{Definition}

%Environment to restate a theorem with the same number.
\newenvironment{reftheorem}[1]{\begin{trivlist}\item[\hskip
\labelsep{\bf Theorem \ref{thm:#1}}]\it}
{\end{trivlist}}

\bibliographystyle{plain}

\title{Intrinsically Linked Graphs in Projective Space}
\author{
		Joel Foisy
		\and
		Jason Bustamante
		\and
		Jared Federman
		\and
		Kenji Kozai
		\and
		Kevin Matthews
		\and
		Kristen McNamara
		\and
		Emily Stark
		\and
		Kirsten Trickey
}

\begin{document}

\maketitle

\begin{abstract}
We examine graphs that contain a non-trivial link in  
every embedding into real projective space, using a weaker notion of  
unlink than was used in \cite{flapan06}.  We call such graphs intrinsically
linked in $\mathbb{R}P^3$. We fully characterize such graphs with connectivity
0,1 and 2. We also show that only one Petersen-family graph is intrinsically
linked in $\mathbb{R}P^3$ and prove that $K_7$ minus any two edges is also
minor-minimal intrinsically linked. In all, 594 graphs are shown to be
minor-minimal intrinsically linked in $\mathbb{R}P^3$.
\end{abstract}

\section{Introduction}

We can represent knots in $\mathbb{R}P^3$ as closed curves or unions of arcs
in the closed 3-ball, $D^3$, such that the endpoints of the arcs lie on
$\partial D^3$. Because $\mathbb{R}P^3$ can be obtained from $D^3$ by
identifying antipodal points of $\partial D^3$, the set of endpoints of the
arcs must be symmetric over the origin. Fix an arbitrary great circle as the
equator. Using ambient isotopy, we can move the arcs so that all of the the
endpoints lie on the equator in general position. Then, the arcs can be
projected onto the disc bounded by the equator with over- and under-crossings
\cite{drobotukhina91,manturov04}.

Projective space has a non-trivial first homology  group,
$H_1(\mathbb{R}P^3) \cong \mathbb{Z} / 2\mathbb{Z}$. The generator for the
group, $g$, is the
cycle originating from the line in $D^3$ that runs between the north and south
poles.  Mroczkowski \cite{mroczkowski03} has shown that every knot in
$\mathbb{R}P^3$ can be transformed into either the trivial cycle or $g$ by
crossing changes and Reidemeister moves on an $\mathbb{R}P^2$ projection of the
knot. This suggests that there exist two non-equivalent unknots in
$\mathbb{R}P^3$. For the rest of the paper, we will refer to cycles that can be
``unknotted'' into a cycle homologous to $g$ as \textit{1-homologous cycles}
and cycles that can be ``unknotted'' into a null-homologous cycle as
\textit{0-homologous cycles}.

In $\mathbb{R}^3$, a two component link $L_1 \cup L_2$ is the unlink if and
only if $L_1$ and $L_2$ are both the unknot and there exist $A,B \subset
\mathbb{R}^3$, both homeomorphic to $B^3$, such that $A \cap B = \emptyset$,
$L_1 \subset A$, and $L_2 \subset B$. Because $g$ cannot be contained within
a sphere, using this definition in $\mathbb{R}P^3$ gives us a unique unlink
consisting of two 0-homologous unknots. However, a 0-homologous unknot and a
1-homologous unknot in $\mathbb{R}P^3$ may be drawn in a projection onto
$\mathbb{R}P^2$ with no crossings. On the other hand, two disjoint 1-homologus
unknots will always cross. Consequently, two reasonable definitions for
unlinks in $\mathbb{R}P^3$ exist.

Let $M$ be a 3-manifold.

\begin{defn} \label{defn:stronglyunlinked}
	Let $L_1 \cup L_2$ be a two-component link in $M$. If $L_1$
	and $L_2$ are both unknots and there exist $A,B \subset M$, both
	homeomorphic to $B^3$, such that $A \cap B = \emptyset$, $L_1 \subset A$,
	and $L_2\subset B$, then $L_1$ and $L_2$ are \textit{strongly unlinked}, and
	$L_1 \cup L_2$ is called the two-component unlink.
\end{defn}

\begin{defn} \label{defn:unlinked}
	Let $L_1 \cup L_2$ be a two-component link in $M$. If $L_1$ and $L_2$ are
	both unknots and there exists $A \subset M$ homeomorphic to $B^3$
	such that $L_1 \subset A$ and $L_2 \subset A^C$, then $L_1$ and $L_2$ are
	\textit{unlinked}, and $L_1 \cup L_2$ is a two-component unlink.
\end{defn}

Notice that definitions \ref{defn:stronglyunlinked} and \ref{defn:unlinked}
are equivalent when $M \cong \mathbb{R}^3$. Similarly, we can define
\textit{strongly splittable} and \textit{splittable} by removing the
condition that both components are unknots.

\begin{defn}
	Let $G$ be a graph. If every embedding of $G$ into $M$
	contains a pair of cycles that form a non-trivial two-component link,
	then $G$ is \textit{intrinsically linked in $M$}.
\end{defn}

Graphs that are intrinsically linked in $\mathbb{R}^3$ have been completely
classified through the work of Conway and Gordon \cite{conway83}, Sachs
\cite{sachs84}, and Roberston, Seymour, and Thomas \cite{robertson95}. They
have shown that a graph is intrinsically linked in $\mathbb{R}^3$ if and only
if it contains one of the Petersen-family graphs (the 7 graphs obtained from
$K_6$ by a sequence of $\triangle-Y$ and $Y-\triangle$ exchanges) as a minor.

Flapan, et al \cite{flapan06} classifies the set of all graphs that are
intrinsically linked when using Definition \ref{defn:stronglyunlinked}. The
complete minor-minimal set for intrinsic linking in any 3-manifold, $M$, is
the same as in $\mathbb{R}^3$ --- namely, the Petersen-family graphs --- when
the two-component unlink is defined to be the union of cycles which bound discs
that do not intersect. In $\mathbb{R}P^3$, their definition coincides with
Definition \ref{defn:stronglyunlinked}.

However, $K_6$ embeds in the projective plane, as shown in Figure \ref{fig:k6},
so there exists an embedding of $K_6$ into projective space for which every
two-component link is an unlink, as given by Definition \ref{defn:unlinked}.
Thus, with this definition, $K_6$ is not
intrinsically linked. For the remainder of this paper, unless otherwise noted,
trivial and non-trivial links will be defined using Definition
\ref{defn:unlinked}.

\begin{figure}
	\begin{center}
		\includegraphics[scale=0.5]{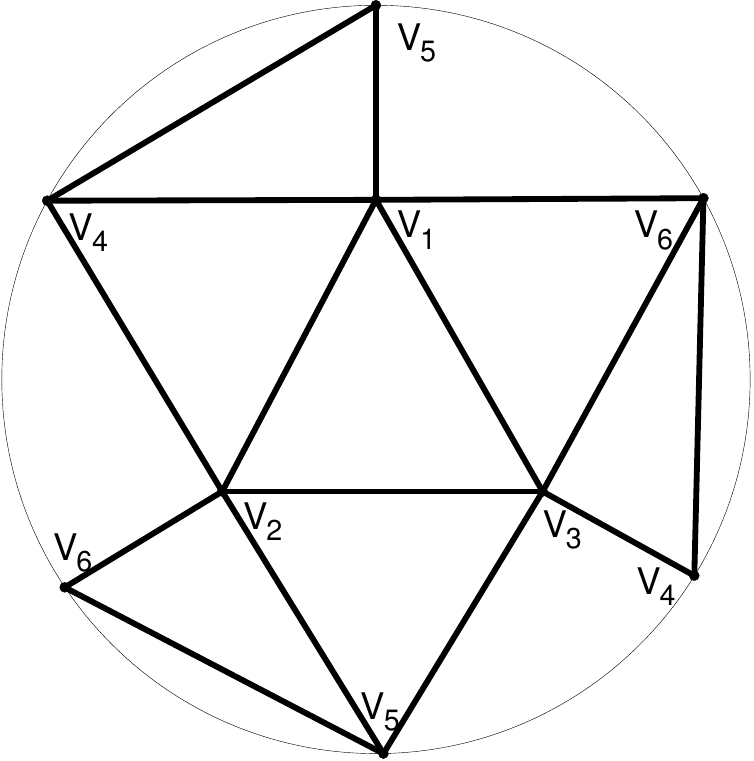}
		\caption{ An embedding of $K_6$ into $\mathbb{R}P^2$. The bounding circle
			is identified using the antipodal map to obtain $\mathbb{R}P^3$.}
		\label{fig:k6}
	\end{center}
\end{figure}

In this paper, we will prove the following theorems.

\begin{thm}\label{thm:lowconnectivity}
	Let $\mathcal{P}$ be the set of all Petersen-family graphs excluding
	the graph obtained from $K_{4,4}$ by removing an edge. Let $A,B,G$ be graphs
	such that $G$ has $k$-connectivity with vertex cut set $\{v_1,\dots,v_k\}$,
	$G = A \cup B$, and $V(A \cap B) = \{v_1,\dots,v_k\}$.
	\begin{enumerate}
		\item If $k=0$ or $1$, then $G$ is minor-minimal intrinsically linked in
			$\mathbb{R}P^3$ if and only if $A,B \in \mathcal{P}$.
		\item If $k=2$, then $G$ is minor-minimal intrinsically linked in
			$\mathbb{R}P^3$ if and only if $A',B' \in \mathcal{P}$,
			$E(A) = E(A') \setminus \{(v_1,v_2)\}$, and $E(B) = E(B') \setminus
			\{(v_1,v_2)\}$.
	\end{enumerate}
\end{thm}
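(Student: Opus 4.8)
The plan is to isolate two ``linking engines'' in $\mathbb{R}P^3$ and reduce each connectivity case to them. For an embedding $f$ of a subgraph $H$, write $f_*\colon H_1(H;\mathbb{Z}/2)\to H_1(\mathbb{R}P^3;\mathbb{Z}/2)\cong\mathbb{Z}/2$ for the induced map, so that a cycle is $1$-homologous exactly when $f_*$ is nonzero on it. The \textbf{first engine}: two disjoint $1$-homologous cycles are never a trivial link. Indeed $\mathbb{R}P^3$ is irreducible, so every embedded $2$-sphere bounds a ball, and the generator $g$ cannot lie in a ball; hence two $1$-homologous cycles can never be separated by a sphere, so by Definition~\ref{defn:unlinked} they are non-trivially linked. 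The \textbf{second engine} is a lifting lemma: if $f$ embeds a Petersen-family graph $H$ with $f_*=0$, then $f$ lifts through the double cover $p\colon S^3\to\mathbb{R}P^3$ to an embedding $\tilde f\colon H\hookrightarrow S^3$; since $H$ is intrinsically linked in $S^3$ \cite{conway83,sachs84}, its image contains disjoint cycles $\tilde C_1,\tilde C_2$ of odd linking number, which project to disjoint cycles $C_1,C_2$ in $\mathbb{R}P^3$. Were $C_1\cup C_2$ trivial, a separating sphere would lift to a sphere in $S^3$ separating $\tilde C_1$ from $\tilde C_2$, forcing their linking number to vanish; so $H$ is already linked whenever $f_*=0$.

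For sufficiency in part~(1), take any embedding of $G=A\cup B$ and branch on $f_*|_A$ and $f_*|_B$. If either is zero, the second engine yields a link inside that factor. Otherwise both factors carry a $1$-homologous cycle; when $k=0$ these lie in disjoint pieces, are automatically vertex-disjoint, and the first engine finishes the argument. For necessity I argue contrapositively: if $A$ carried no Petersen minor it would embed linklessly in a small ball placed in a null-homologous region, while $B$ (not itself linked in $\mathbb{R}P^3$) would take a linkless embedding; since for $k\le1$ every cycle lies in a single factor, no cross-pair could be forced non-trivial and $G$ would be unlinked. Thus each factor is intrinsically linked in $\mathbb{R}^3$, and minor-minimality of $G$ together with the sufficiency direction forces $A,B$ to be \emph{minimal} such graphs, i.e.\ Petersen-family graphs. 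Finally $A,B\neq K_{4,4}\setminus e$, since that graph is the unique Petersen-family graph already intrinsically linked in $\mathbb{R}P^3$; equivalently it is the unique non-projective-planar one, so the remaining members of $\mathcal{P}$ all embed in $\mathbb{R}P^2$. This projective-planarity is exactly what certifies minor-minimality in the ``if'' direction: deleting or contracting an edge of $A$ destroys its Petersen minor, so the damaged factor embeds linklessly in a ball while the intact factor is laid into $\mathbb{R}P^2$, producing a linkless embedding of the minor.

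The case $k=2$ is handled by the $2$-sum structure. A $v_1$--$v_2$ path through $B$ serves as a virtual copy of the edge $(v_1,v_2)$, so that $A$ together with this path is a subdivided $A'$; running the dichotomy above on $A'$ and $B'$ (with the roles of the two virtual edges exchanged) reduces everything to the two engines, the only bookkeeping being that a detected link may run along a connecting path and that the two virtual paths be chosen disjoint from the cycles in play.

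I expect the main obstacle to be the case $k=1$ (and its analogue at the $2$-cut in part~(2)) when the projective homology is \emph{concentrated at the cut set}: it can happen that $f_*|_A\neq0$ yet every $1$-homologous cycle of $A$ runs through $v_1$ (a cochain supported on a proper subset of the star of $v_1$ is non-trivial but restricts trivially to $A-v_1$), and likewise for $B$. Then the two $1$-homologous cycles are forced to meet at $v_1$, the first engine does not apply directly, and neither factor lifts to $S^3$. To resolve this I would pass to the connected double cover $\hat A$ of $A$ determined by $f_*$, which embeds in $S^3$ as two copies of $A-v_1$ joined along the two lifts of $v_1$, and analyse which of its Conway--Gordon links survive projection to $\mathbb{R}P^3$ as genuinely disjoint cycles; controlling this projection, and the corresponding local re-pairing of the arcs meeting at the cut, is the technical heart of the proof.
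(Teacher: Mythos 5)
Your two ``engines'' are both sound as stated: containment in a ball forces a cycle to be null-homologous, so no ball as in Definition~\ref{defn:unlinked} can contain a $1$-homologous component (the irreducibility detour is unnecessary), and the lifting argument to $S^3$ correctly shows that a Petersen-family subgraph whose cycles are \emph{all} $0$-homologous is already linked. Your necessity argument (embed the non-linked factor in a small ball glued to a linkless embedding of the other factor, then invoke minor-minimality and exclude $K_{4,4}\setminus e$ via Theorem~\ref{thm:k44-e}) is essentially the paper's own argument. The problem is the case you yourself flag at the end: both factors carry $1$-homologous cycles, but every such cycle passes through the cut vertex (for $k=1$) or through the cut vertices/virtual edge (for $k=2$). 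Your second engine is useless there --- the factor does not lift --- and your first engine cannot be applied because the two $1$-homologous cycles cannot be chosen disjoint. Deferring this as ``the technical heart'' means the sufficiency direction, which is the substantive half of the theorem, is not proved.

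The paper closes exactly this case with Lemma~\ref{lemma:P-v}, which is strictly stronger than your lifting lemma: if every cycle of $P\setminus\{v\}$ is $0$-homologous (cycles through $v$ are allowed to be $1$-homologous), then the embedding of $P$ already contains a non-trivial link. Its proof is not a covering-space argument but a Conway--Gordon style count: after crossing changes and isotopy, $P\setminus\{v\}$ is affine and only $v$ meets the line at infinity; the mod~$2$ sum $\lambda$ of linking numbers over all two-component links is shown to be unchanged when a $1$-homologous component is rerouted near the line at infinity into a $0$-homologous one crossing over all strands, and crossing changes also preserve $\lambda$, so $\lambda\equiv 1$ by Conway--Gordon and Sachs. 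With this lemma, sufficiency is immediate: either some factor $P_i$ is linked outright, or $P_i\setminus\{v_i\}$ contains a $1$-homologous cycle for each $i$, and applying the lemma at $v_1$ in one factor and at $v_2$ in the other makes the two cycles automatically vertex-disjoint and disjoint from the virtual edges. By contrast, your proposed repair via the connected double cover $\hat A$ faces real obstructions you do not address: disjoint cycles in $\hat A\subset S^3$ need not project to disjoint curves in $\mathbb{R}P^3$ (one must also avoid the deck translate of the partner cycle), a cycle of $\hat A$ need not be a lift of a cycle of $A$, and the Conway--Gordon parity in $\hat A$ controls neither issue. So the missing step is genuine, and it is precisely the content of the paper's key lemma.
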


The theorem classifies intrinsically linked graphs with low connectivity. The
first statement says that a graph that is disconnected (or with 1-connectivity)
is intrinsically linked if and only if it is the disjoint union (or union
along a vertex) of two Petersen-family graphs. The second statement is
analogous for graphs with 2-connectivity, but the edge between the two vertices
along which the Petersen-family graphs are joined is removed.

\begin{thm}\label{thm:k44-e}
	The graph obtained by removing an edge from $K_{4,4}$ is minor-minimal
	intrinsically linked in $\mathbb{R}P^3$.
\end{thm}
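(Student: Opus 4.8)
\section*{Proof proposal}

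The plan is to separate the claim into two parts: (A) that $K_{4,4}-e$ is intrinsically linked in $\mathbb{R}P^3$ under Definition \ref{defn:unlinked}, and (B) that every proper minor fails to be. For (B) I would exploit that $K_{4,4}-e$ is a Petersen-family graph and hence minor-minimal for intrinsic linking in $\mathbb{R}^3$. The key observation is that any graph $H$ that is \emph{not} intrinsically linked in $\mathbb{R}^3$ is also not intrinsically linked in $\mathbb{R}P^3$: take an embedding of $H$ in $\mathbb{R}^3$ in which every pair of disjoint cycles is a (strong) unlink, and push the whole picture into a ball $B \subset \mathbb{R}P^3$. Each such pair then bounds disjoint balls inside $B$, so in particular one cycle lies in a ball $A$ and the other in $A^C$, making every pair a two-component unlink in the sense of Definition \ref{defn:unlinked}. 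Since every proper minor of $K_{4,4}-e$ is not intrinsically linked in $\mathbb{R}^3$ by Petersen-family minimality, each is not intrinsically linked in $\mathbb{R}P^3$, giving (B).

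The substance is (A). The guiding principle, already foreshadowed in the introduction, is that two disjoint cycles that are \emph{both} $1$-homologous can never be a two-component unlink: a $1$-homologous cycle is not null-homotopic and so cannot be contained in any ball $A$, whereas Definition \ref{defn:unlinked} requires $L_1 \subset A$. I would record this as a lemma and then aim to show that every embedding of $K_{4,4}-e$ into $\mathbb{R}P^3$ contains either two disjoint $1$-homologous cycles, or two disjoint $0$-homologous cycles with nonzero mod-$2$ linking --- in either case a non-trivial link.

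To detect this I would run a Conway--Gordon-type parity argument on the $\mathbb{R}P^2$ projection. First I would enumerate the pairs of vertex-disjoint cycles: because the graph is bipartite on $4+4$ vertices, any two disjoint cycles must be two $4$-cycles that together partition all eight vertices, and the valid pairs are exactly those in which the two degree-$3$ vertices (the endpoints of the missing edge) lie in different $4$-cycles. For such a pair $(C_1,C_2)$ I would define a $\mathbb{Z}/2$ invariant $\lambda(C_1,C_2)$ read off the diagram --- the mod-$2$ count of crossings between the two components, corrected by a term recording the product of their homology classes --- arranged so that $\lambda=1$ certifies a non-trivial link and, in particular, $\lambda(C_1,C_2)=1$ whenever both components are $1$-homologous. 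I would then show that $\sum \lambda(C_1,C_2)$, summed over all valid disjoint pairs, is invariant under the Reidemeister moves on $\mathbb{R}P^2$ projections (including the move that pushes a strand across the antipodal boundary), so it is an invariant of the embedding, and finally evaluate it on one explicit model embedding to find that it is odd. An odd total forces some pair to have $\lambda=1$, i.e. to be non-trivially linked, which proves (A).

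The main obstacle is the invariance and normalization of this mod-$2$ sum. Unlike the $\mathbb{R}^3$ Conway--Gordon argument, the homological correction term must be chosen so that the boundary-crossing Reidemeister move --- which can alter both the crossing count and the homology classes of the affected cycles --- leaves $\sum \lambda$ unchanged, and so that $\lambda$ genuinely certifies a link that is non-trivial in the weak sense of Definition \ref{defn:unlinked} rather than merely the strong sense of Definition \ref{defn:stronglyunlinked}. Verifying that each admissible move preserves the total parity, and that the bookkeeping over the restricted family of valid $4$-cycle pairs closes up correctly, is where the real work lies; the concluding model computation should then be routine.
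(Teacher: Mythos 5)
Your minimality argument (B) is sound, and it is in fact a different route from the paper's: you derive minimality from the $\mathbb{R}^3$ minor-minimality of $K_{4,4}\setminus\{e\}$ by pushing a linkless $\mathbb{R}^3$ embedding of each proper minor into a ball, whereas the paper simply cites that every proper minor is projective planar; your version tacitly needs the Robertson--Seymour--Thomas flat-embedding theorem so that every pair of disjoint cycles in the pushed-in embedding really consists of unknots bounding disjoint balls, but with that caveat it works. The fatal gap is in (A): the invariant you propose carries no information. For disjoint cycles $C_1,C_2$ in $\mathbb{R}P^3$, the mod-2 number of crossings between the two components in an $\mathbb{R}P^2$ projection is already determined by homology: projection from the projection point identifies $[C_i]\in H_1(\mathbb{R}P^3;\mathbb{Z}/2\mathbb{Z})$ with the class of the projected curve in $H_1(\mathbb{R}P^2;\mathbb{Z}/2\mathbb{Z})$, and the mod-2 crossing count is the mod-2 intersection number of the two projected curves, which by Poincar\'{e} duality equals $[C_1]\cdot[C_2]$. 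Hence ``crossing parity corrected by the product of the homology classes'' is a function of the homology classes alone: it takes the same value on the Hopf link as on the split unlink in a ball (both components 0-homologous, both diagrams with evenly many inter-component crossings). In particular, in the homotopy class of affine embeddings --- where all cycles are 0-homologous and where, by Sachs, the link you must certify is a pair of 0-homologous cycles with odd \emph{linking number} --- your $\lambda$ is constant across all nine valid pairs and cannot single out the linked one. The natural repair, counting only crossings where $C_1$ passes over $C_2$ (which does recover $lk \bmod 2$ for 0-homologous pairs), fails for pairs of 1-homologous cycles: there the two over-crossing counts have opposite parities, so no symmetric mod-2 quantity exists, reflecting the fact that the linking number of two 1-homologous curves is a half-integer with no mod-2 reduction.

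There is a second structural gap even granting a well-defined $\lambda$: invariance under Reidemeister moves (plus crossing changes, which you would also need but never mention) only makes $\sum\lambda$ constant on each \emph{homotopy class} of embeddings. Embeddings of $K_{4,4}\setminus\{e\}$ into $\mathbb{R}P^3$ fall into many homotopy classes, indexed by which cycles are 1-homologous (an element of $H^1(G;\mathbb{Z}/2\mathbb{Z})$), and no isotopy or crossing change ever alters this data --- your worry that the boundary move changes homology classes is misplaced --- so evaluating the sum on one explicit model embedding says nothing about the other classes. This is precisely why the paper's proof has the shape it does: Proposition \ref{prop:k4} and Lemma \ref{thm:k33} classify the possible homology assignments on $K_{3,3}$ subgraphs into 4-patterns and 6-patterns; the case analysis in the proof of Theorem \ref{thm:k44-e} then produces two disjoint 1-homologous cycles in most classes (such a pair is automatically linked, as you correctly observe); and the Conway--Gordon parity argument is confined to Lemma \ref{lemma:P-v}, invoked only in the classes where every 1-homologous cycle passes through a single vertex --- exactly the classes in which each valid pair has at most one 1-homologous component, so that linking numbers can be converted to the affine situation and summed. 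Any correct proof must, in one way or another, stratify by homology class as the paper does; the single global diagrammatic invariant you describe cannot exist.
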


\begin{thm}\label{thm:k7-2e}
	The graphs obtained from $K_7$ by removing any two edges are minor-minimal
	intrinsically linked in $\mathbb{R}P^3$.
\end{thm}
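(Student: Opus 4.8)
The plan is to prove the two halves of the statement separately: first that $G = K_7$ minus two edges is intrinsically linked in $\mathbb{R}P^3$, and then that every single-edge deletion and contraction of $G$ fails to be. The basic certificate I would use throughout is the mod-$2$ crossing number $\omega(C_1,C_2)$ of two disjoint cycles, read off from a generic projection to $\mathbb{R}P^2$. Since a two-component unlink in the sense of Definition~\ref{defn:unlinked} has its components separated by the boundary sphere of a ball, it projects with $\omega = 0$, so $\omega(C_1,C_2)=1$ certifies that $C_1\cup C_2$ is a nontrivial link. The second tool is the homology labeling: an embedding assigns to each cycle its class in $H_1(\mathbb{R}P^3)\cong\mathbb{Z}/2\mathbb{Z}$, and since this assignment is additive on the cycle space it is recorded by a cochain $\epsilon\colon E(G)\to\mathbb{Z}/2\mathbb{Z}$ (well defined up to cuts), a cycle $C$ being $1$-homologous precisely when $\sum_{e\in C}\epsilon(e)=1$.

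For intrinsic linking I would argue by cases on $\epsilon$. If the embedding admits two vertex-disjoint cycles that are both $1$-homologous, then by the observation recalled in the introduction that two disjoint $1$-homologous unknots must cross, we get $\omega=1$ on that pair and are done. Otherwise the $1$-homologous cycles are confined, and I would pass to a subgraph whose relevant cycles are all $0$-homologous and run a classical argument there: $G$ is intrinsically linked in $\mathbb{R}^3$ (it has a Petersen-family minor, for instance a $K_6$ minor), so a Conway--Gordon count produces a pair with odd classical linking, and a $0$-homologous pair with odd classical linking again has $\omega=1$ and hence is $\mathbb{R}P^3$-nontrivial. The content is to show that for every $\epsilon$ avoiding disjoint $1$-homologous pairs such a $0$-homologous linked configuration survives; here I would exploit the high connectivity of $K_7$ minus two edges and the abundance of vertex-disjoint triangle and triangle/square pairs to pin down the possible $\epsilon$ up to the automorphisms of $G$.

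Minor-minimality splits cleanly. A contraction $G/e'$ has six vertices, so its underlying simple graph is a subgraph of $K_6$ and $G/e'$ is a minor of $K_6$; since $K_6$ embeds in $\mathbb{R}P^2$ (Figure~\ref{fig:k6}) and in any such surface embedding two disjoint cycles have at most one non-contractible member and therefore project without crossings, $K_6$ is not intrinsically linked, and as non-intrinsic-linkedness is minor-closed neither is $G/e'$. A deletion $G-e'$ is $K_7$ minus three edges, so it suffices to show that $K_7$ minus any three edges admits a linkless embedding in $\mathbb{R}P^3$. Up to symmetry there are only a few isomorphism types for the three deleted edges (a matching, a path, a star, a triangle, and the mixed types), and for each I would exhibit an explicit embedding, placing the bulk of the graph in an $\mathbb{R}P^2$ panel as in Figure~\ref{fig:k6} and routing the remaining edges through the nontrivial loop so as to leave every pair of disjoint cycles unlinked.

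I expect the main obstacle to be the second case of the intrinsic-linking argument: controlling all homology labelings $\epsilon$ for which no two disjoint cycles are simultaneously $1$-homologous, and verifying that a $0$-homologous classically linked configuration always remains to drive the Conway--Gordon count. Because $H^1(G;\mathbb{Z}/2\mathbb{Z})$ is large, the crux is to compress this case analysis using the symmetry of $G$ and the combinatorics of vertex-disjoint cycle pairs, rather than to enumerate labelings directly.
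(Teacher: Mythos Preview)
Your high-level plan matches the paper's: split on whether the homology cochain $\epsilon$ yields two disjoint $1$-homologous cycles, and if not, run a Conway--Gordon argument on a $0$-homologous configuration via Lemma~\ref{lemma:P-v}. The paper treats the adjacent- and non-adjacent-edge cases separately, and in place of a raw case analysis on $\epsilon$ it proves one structural lemma (Lemma~\ref{k4lem}): in any linkless $\mathbb{R}P^3$-embedding of $K_6$, every $K_4$ subgraph carries a $1$-homologous cycle. Both cases then fall in a few lines. In the adjacent case one applies the lemma to the $K_4$ on the four neighbours of the degree-$4$ vertex $v_7$, pushes the resulting $1$-homologous triangle through $v_7$ by additivity, and then applies the lemma again to the disjoint $K_4$ on the remaining four vertices. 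In the non-adjacent case one shows the subgraph on $\{v_1,v_4,v_5,v_6,v_7\}$ is entirely $0$-homologous, contracts an edge to turn it into a $K_4$ inside a $K_6$, and invokes the lemma. So the obstacle you flag is exactly where the content lies, and Lemma~\ref{k4lem} is the device that collapses it; a direct enumeration of $\epsilon$ up to symmetry would also work but is heavier.

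One correction to your toolkit: the mod-$2$ \emph{crossing} count $\omega$ in an $\mathbb{R}P^2$ projection is just the intersection pairing on $H_1(\mathbb{R}P^2;\mathbb{Z}/2)$, so it is identically $0$ on any pair of $0$-homologous cycles, Hopf-linked or not. The invariant you actually need for the Conway--Gordon half is the mod-$2$ \emph{linking} number (count over-crossings of one component over the other, or intersect with a Seifert surface), which is well defined once one component is $0$-homologous and is what Lemma~\ref{lemma:P-v} uses. With that fix your two certificates are correct: disjoint $1$-homologous cycles are non-splittable since neither fits in a ball, and a $0$-homologous pair with $lk\equiv 1\pmod 2$ is non-splittable by the usual argument.

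On minor-minimality the paper does not construct linkless embeddings of each $K_7-3e$; it cites the Glover--Huneke--Wang and Archdeacon forbidden-minor list for $\mathbb{R}P^2$ to conclude that every proper minor is projective-planar (or becomes planar after deleting a vertex), hence linklessly embeddable. Your contraction argument via $K_6\hookrightarrow\mathbb{R}P^2$ is correct and is this same observation restricted to contractions; your deletion plan is viable but more work than appealing to the known classification.
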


\section{Definitions and Notation}

Before proceeding to our results, we begin with some elementary notation
and definitions.

\begin{defn}
	A graph $G = (V,E)$ is a set of vertices $V(G)$ and edges $E(G)$, where
	an edge is an unordered pair $(v_1,v_2)$ with $v_1,v_2 \in V$.
\end{defn}

\begin{defn}
	Let $G$ be a graph and $v_1, v_2, \dots, v_n \in V(G)$ and
	\begin{equation*}
		(v_1,v_2), (v_2,v_3),\dots, (v_{n-1},v_n), (v_n,v_1) \in E(G)
	\end{equation*}
	such that $v_i \neq v_j$ for $i \neq j$. Then, the sequences of vertices
	$v_1, v_2, \dots, v_n$ and edges $(v_1,v_2),(v_2,v_3),\dots,$ $(v_{n-1},v_n),
	(v_n,v_1)$ is an $n$-cycle in the graph $G$, denoted $v_1v_2\dots v_n$.
\end{defn}

In an abuse of notation, we will also refer to the image of a cycle
$v_1v_2\dots v_n$ in an embedding of the graph $G$ as the cycle
$v_1v_2\dots v_n$, when the distinction is clear.

The following notion of a graph minor allows us to specify when one graph
contains another graph within it.

\begin{defn}
	Let $G$ be a graph. If $H$ is a graph such that $H$ can be obtained from
	$G$ by a sequence of the following three operations:
	\begin{enumerate}
		\item removal of an edge \label{minor:edge}
		\item removal of a vertex \label{minor:vertex}
		\item contraction along an edge, \label{minor:contraction}
	\end{enumerate}
	then $H$ is called a \textit{minor} of $G$, written $H \leq G$. If $H \leq G$
	but $H \neq G$, then $H$ is called a \textit{proper minor} of $G$, written
	$H < G$.
	
	If $H \leq G$, we also call $G$ an \textit{expansion} of $H$.
\end{defn}

Nesetril and Thomas \cite{nesetril85} provide the following result for graph
minors in $\mathbb{R}^3$, and the general result in arbitrary 3-manifolds can be
proved by noticing that expansions preserve isotopy classes of cycles and links.

\begin{prop}[J. Nesetril and R. Thomas, 1985]
	Let $H$ be a graph that is intrinsically linked in a 3-manifold $M$. If $G$
	is a graph such that $H \leq G$, then $G$ is also intrinsically linked in
	$M$.
\end{prop}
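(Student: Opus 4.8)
The plan is to reduce the statement to the three elementary minor operations and then argue by induction on the length of the sequence realizing $H \leq G$. Since deletion of an edge, deletion of a vertex, and contraction along an edge generate all minors, it suffices to prove the following claim: if $H$ is obtained from $G$ by a single such operation and $H$ is intrinsically linked in $M$, then $G$ is intrinsically linked in $M$. Granting this, a routine induction on the number of operations closes the general case. To prove the claim I would work directly from the definition: fix an arbitrary embedding $f\colon G \to M$ and show that $f$ itself contains a non-trivial two-component link, using only the hypothesis that $H$ is intrinsically linked.

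The deletion cases are immediate. If $H = G - e$ or $H = G - v$, then restricting $f$ to the subgraph $H$ yields an embedding $f|_H$ of $H$ into $M$. Because $H$ is intrinsically linked, $f|_H$ contains a pair of cycles forming a non-trivial link in the sense of Definition \ref{defn:unlinked}. Every cycle of $H$ is literally a cycle of $G$, and its image under $f|_H$ coincides with its image under $f$, so the same pair of cycles forms the identical non-trivial link in $f$. As $f$ was arbitrary, $G$ is intrinsically linked.

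The contraction case is the crux. Suppose $H = G/e$ with $e = (u,v)$, and let $w$ be the vertex of $H$ obtained by identifying $u$ and $v$. Given $f$, I would produce an embedding $f_H\colon H \to M$ by shrinking the embedded arc $f(e)$ to a point inside a small ball neighborhood $N$ of $f(e)$ that meets the rest of $f(G)$ only in the arcs emanating from $u$ and $v$; outside $N$ the two embeddings agree. A cycle $C$ in $H$ then lifts to a cycle $\tilde C$ in $G$: if $C$ avoids $w$, then $\tilde C = C$; if $C$ passes through $w$ using two edges inherited from a common endpoint, then $\tilde C$ again avoids $e$; and if the two edges came from $u$ and from $v$ respectively, then $\tilde C$ is $C$ with $w$ expanded back into the arc $f(e)$. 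In every case $f_H(C)$ and $f(\tilde C)$ differ only inside $N$, and are therefore ambient isotopic in $M$ by an isotopy supported in $N$.

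The main obstacle, and the point requiring genuine care, is to confirm that this ambient isotopy preserves non-triviality in the precise sense of Definition \ref{defn:unlinked}, and not merely the isotopy type of each component in isolation. Since $H$ is intrinsically linked, $f_H$ contains cycles $C_1, C_2$ forming a non-trivial link; their lifts $\tilde C_1, \tilde C_2$ form a link in $f$ that is ambient isotopic to $C_1 \cup C_2$ via an isotopy $\Phi$ supported in $N$. Were $\tilde C_1 \cup \tilde C_2$ trivial, then both $\tilde C_i$ would be unknots and there would exist a ball $A \subset M$ with $\tilde C_1 \subset A$ and $\tilde C_2 \subset A^C$; applying $\Phi$ carries each $\tilde C_i$ to the unknot $C_i$ and carries $A$ to a ball separating $C_1$ from $C_2$, making $C_1 \cup C_2$ trivial in $f_H$, a contradiction. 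Hence $\tilde C_1 \cup \tilde C_2$ is a non-trivial link in the arbitrary embedding $f$, so $G$ is intrinsically linked. Combining the three cases with the induction on the number of operations establishes the proposition.
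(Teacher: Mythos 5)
Your proof is correct and follows essentially the same route as the paper, which disposes of this proposition in a single sentence by observing that expansions preserve isotopy classes of cycles and links; your induction over the three minor operations, with the regular-neighborhood argument for edge contraction and the check that ambient isotopy preserves non-triviality in the sense of Definition \ref{defn:unlinked}, is exactly that observation written out in full detail.
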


\begin{defn}
	A graph $G$ is \textit{minor-minimal intrinsically linked in
	$M$} if $G$ is intrinsically linked in $M$ and no proper minor of $G$ is
	also intrinsically linked in $M$.
\end{defn}

In $\mathbb{R}^3$, the set of all minor-minimally intrinsically linked graphs
is given by the seven Petersen-family graphs. These graphs are obtained from
$K_6$ by $\triangle-Y$ and $Y-\triangle$ exchanges, where a $\triangle-Y$
exchange is the removal of three edges $(v_1,v_2),(v_1,v_3),(v_2,v_3)$ and
the addition of a vertex $v$ along with the edges $(v,v_1),(v,v_2),(v,v_3)$.
A $Y-\triangle$ exchange is the reverse operation.

As a result of Robertson and Seymour's proof of the Minor Theorem
\cite{robertson04}, the set of all minor-minimally intrinsically linked graphs
in $M$ is finite. This means that a full classification of minor-minimally
intrinsically linked graphs in $\mathbb{R}P^3$ is possible. Becase
projective space has a simple first homology group, it may not be unrealistic
to find a complete characterization for intrinsic linking.

\section{Linked Graphs with Low Connectivity}

Exactly six of the seven Petersen-family graphs have embeddings into
$\mathbb{R}P^2$ \cite{glover79,archdeacon83}, and thus have linkless
embeddings into $\mathbb{R}P^3$. We later show that the graph obtained by
removing an edge from $K_{4,4}$, which does not have a projective planar
embedding, is in fact intrinsically linked in $\mathbb{R}P^3$.

Although not all Petersen-family graphs are intrinsically linked in
$\mathbb{R}P^3$, we can use their intrinsic linking in $\mathbb{R}^3$ to
deduce some facts about embeddings with no non-trivial two-component links.

\begin{lem} \label{lemma:P-v}
	Let $P$ be a Petersen-family graph and $v$ be a vertex of $P$. If every cycle
	of $P \backslash \{v\}$ is 0-homologous in an embedding $f:P \rightarrow
	\mathbb{R}P^3$, then $f(P)$ contains a non-trivial link.
\end{lem}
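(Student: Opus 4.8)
The plan is to pass to the universal double cover $p\colon S^3 \to \mathbb{R}P^3$, whose deck transformation $\tau$ is the (orientation-preserving) antipodal map, and to transport the known intrinsic linking of $P$ in $S^3$ downstairs. First I would use the hypothesis directly: since every cycle of $P\setminus\{v\}$ is $0$-homologous, the homomorphism $\pi_1(P\setminus\{v\}) \to \pi_1(\mathbb{R}P^3)\cong \mathbb{Z}/2\mathbb{Z}$ induced by $f$ is trivial, so $f|_{P\setminus\{v\}}$ lifts to $S^3$. Hence $p^{-1}\big(f(P\setminus\{v\})\big)$ is a pair of disjoint copies $Q_0$ and $Q_1=\tau Q_0$ of $P\setminus\{v\}$, interchanged by $\tau$.

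The engine of the argument is a descent principle for non-triviality. Suppose $a$ and $b$ are disjoint cycles of $f(P)$ that were \emph{unlinked} in the sense of Definition~\ref{defn:unlinked}, so that some ball $A$ contains one of them with the other in $A^{C}$. Lifting $A$ produces two disjoint balls $\tilde A$ and $\tau\tilde A$ in $S^3$, and the preimage of the curve lying in $A^{C}$ is then disjoint from the ball containing a lift of the other curve; consequently the relevant $S^3$-linking number of their lifts must vanish. Two consequences I would record are: (i) two disjoint $1$-homologous cycles are \emph{always} non-trivially linked, since a $1$-homologous cycle cannot lie inside any ball; and (ii) if a lift of $a$ has odd linking number in $S^3$ with the full preimage $p^{-1}(b)$, then $a\cup b$ is non-trivial. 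Thus it suffices to exhibit in $f(P)$ disjoint cycles that detect an odd linking number upstairs.

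I would then split into two cases according to the homology of cycles through $v$. If every cycle of $P$ is $0$-homologous, then $f$ lifts to an embedding $P\hookrightarrow S^3$ realized by $Q_0$ together with a single lift of $v$; since $P$ is intrinsically linked in $S^3$, this copy contains disjoint cycles $\tilde a,\tilde b$ with odd linking number, and because $p$ is injective on a single sheet their images are disjoint in $\mathbb{R}P^3$. The descent principle then finishes this case. The remaining possibility is that some cycle through $v$ is $1$-homologous; equivalently, the lifts of the edges at $v$ attach the lift $\tilde v_0$ to vertices in \emph{both} $Q_0$ and $Q_1$, so that $\tilde P := p^{-1}(f(P))$ is a single \emph{connected} embedded double cover of $P$ in $S^3$.

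The main obstacle is this connected case. Here I would argue that $\tilde P$ contains a Petersen-family graph as a minor and is therefore intrinsically linked in $S^3$, producing disjoint cycles $X,Y\subset\tilde P$ of odd linking number. The delicate point, which I expect to be the crux of the proof, is to arrange that this link projects to an honest two-component link downstairs: each of $X,Y$ must be either injective under $p$ (projecting to a simple $0$-homologous cycle) or invariant under $\tau$ (projecting to a simple $1$-homologous cycle), and $X$ must be disjoint from $\tau Y$ so that the images do not collide. Controlling the free involution $\tau$ in this way — and checking that a link of the correct parity survives, presumably through a Conway--Gordon-type count of linking numbers over the disjoint cycle pairs of $\tilde P$ — is where the real work lies; once suitable $X$ and $Y$ are found, the descent principle again certifies that their images form a non-trivial link in $\mathbb{R}P^3$.
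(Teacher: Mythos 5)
Your double-cover strategy is genuinely different from the paper's argument, and your descent principle is essentially sound --- in fact it can and must be strengthened: if $a\cup b$ is unlinked per Definition \ref{defn:unlinked}, then the component lying in the ball $A$ bounds a $2$-chain inside a single lift $\tilde A$ of $A$, and that chain misses \emph{every} lift of the other component, so each pairwise linking number of lifts vanishes individually, not merely the sum. This strengthening is needed because your consequence (ii), as stated, does not even close Case 1: knowing $lk(\tilde a,\tilde b)$ is odd tells you nothing about $lk\bigl(\tilde a,\,p^{-1}(b)\bigr)=lk(\tilde a,\tilde b)+lk(\tilde a,\tau\tilde b)$, since the cross term $lk(\tilde a,\tau\tilde b)$ is uncontrolled. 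That defect is repairable; the one below is not, as written.

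The genuine gap is Case 2, which is the heart of the lemma. When some cycle through $v$ is $1$-homologous, $\tilde P=p^{-1}(f(P))$ is indeed a connected double cover, and your (unproved) claim that it contains $P$ as a minor is true --- contract one sheet $Q_1\cup\{\tilde v_1\}$ onto $\tilde v_0$ --- so $\tilde P$ is intrinsically linked in $S^3$ and contains disjoint cycles $X,Y$ of odd linking number. But that pair comes with no equivariance whatsoever: nothing prevents $X$ from meeting $\tau Y$, and nothing prevents $X$ or $Y$ from passing through both lifts of a vertex, in which case its image downstairs is not an embedded cycle at all. You identify exactly these requirements and then defer them to a ``Conway--Gordon-type count'' that you never carry out; no argument is offered that a pair of odd linking number satisfying the equivariance constraints must exist, and producing one is precisely the difficulty the lemma is about. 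By contrast, the paper never goes upstairs: working in a projection of $\mathbb{R}P^3$ onto a disc, it uses crossing changes (which preserve the mod-$2$ sum $\lambda$ of linking numbers over all disjoint cycle pairs) to make $P\setminus\{v\}$ affine, replaces each $1$-homologous cycle through $v$ by a $0$-homologous one with the same linking numbers via a local move at the line at infinity, and then invokes Conway--Gordon and Sachs for the resulting affine embedding to conclude $\lambda\equiv 1 \pmod 2$. Some analogous count --- done downstairs, or made honestly $\tau$-equivariant upstairs --- is the missing ingredient in your proposal.
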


\begin{proof}
	Let $L_1 \cup L_2$ be a link with a projection onto a disc representing
	$\mathbb{R}P^2$ such that
	$L_1$ is affine and does not cross the boundary of the projection and $L_2$
	is 1-homologous. Take a point $p$ of $L_2$ that intersects the boundary of
	the projection (the line at infinity). Let $U$ be a sufficiently small
	neighborhood of $p$ in the projection such that
	$L_1$ does not intersect $U$ and $L_2$ intersects $\partial U$ in exactly
	two points, $p'$ and $q'$. Connect $p'$ and $q'$ with a line segment $s$
	such that in the projection, $s$ crosses over every strand, and $s$ does not
	intersect the line at infinity. Define $L_2'$ as the cycle consisting of $s$
	and the segment of $L_2$ that is not in $U$. Then, $L_2'$ is a 0-homologous
	cycle such that the linking number of $L_1 \cup L_2$ is the same as the
	linking number of $L_1 \cup L_2'$ (see Figure \ref{fig:0h1h}).
	
\begin{figure}[htb]
	\begin{center}
		\subfigure[$p'$ and $q'$ in a small neighborhood of $p$]{
			\includegraphics[scale=0.7]{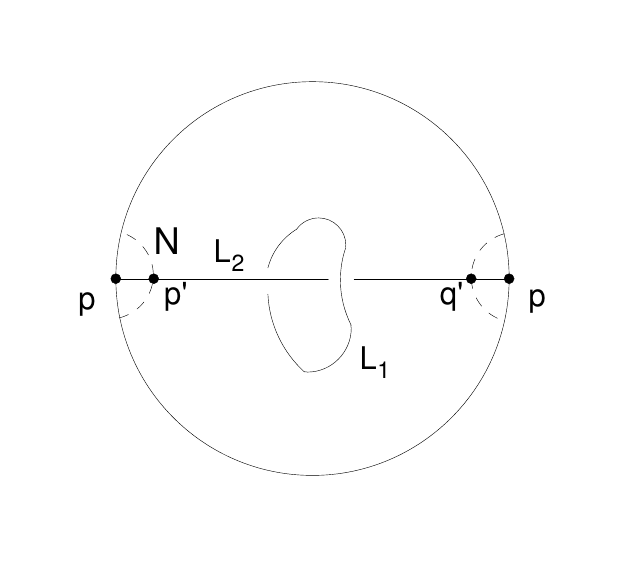}
		}
		\subfigure[$s$ crosses over all other arcs]{
			\includegraphics[scale=0.7]{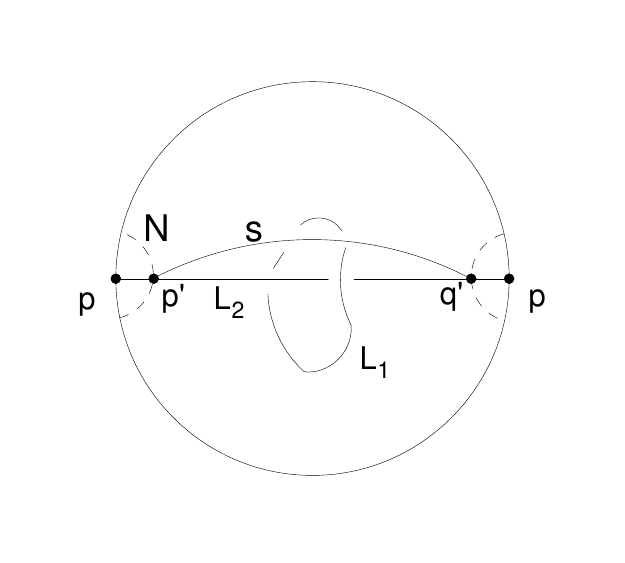}
		}
		
		\subfigure[$L_2'$ constructed from $L_2$]{
			\includegraphics[scale=0.7]{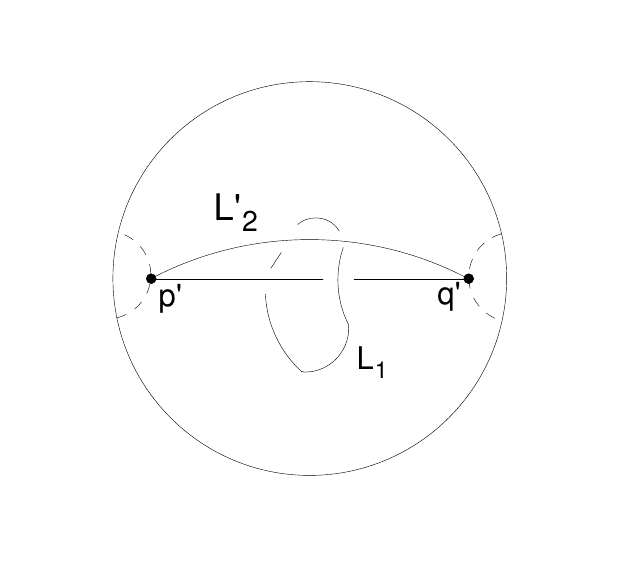}
		}
	\end{center}
	\caption{Conversion of a link consisting of an affine knot and 1-homologous
		knot into one consisting of two 0-homologous knots.}
	\label{fig:0h1h}
\end{figure}

	Consider $f(P)$. Using crossing changes and ambient isotopy,
	we may assume that the embedding for the subgraph $P \backslash \{v\}$ is
	affine so that $f(P \backslash \{v\})$ does not intersect the boundary
	of the projection (in other words, it does not pass through the line at
	infinity), $v$ lies on the boundary of the projection, and no point besides
	$v$ lies on the line at infinity.
	
	Define
	\begin{equation*}
		\lambda \equiv \sum_{\substack{L_1 \cup L_2\text{ is a}\\
			\text{two-component}\\ \text{link in } f(P)}} lk(L_1,L_2)\text{ (mod 2)},
	\end{equation*}
	where $lk(L_1,L_2)$ is the linking number of $L_1 \cup L_2$.
	The previous observation shows that there exists an affine embedding of $P$
	for which $\lambda$ is unchanged.
	Because crossing changes do not affect $\lambda$, the results of
	Conway and Gordon \cite{conway83} and Sachs \cite{sachs84} for $K_6$ and
	Petersen graphs in $\mathbb{R}^3$, respectively, imply that $\lambda
	\equiv 1 \text{ (mod 2)}$ for the embedding $f$ into $\mathbb{R}P^3$.
	Hence, the embedding must contain a two-component link with non-zero
	linking number, proving the lemma.
\end{proof}

Lemma \ref{lemma:P-v} allows us to completely classify intrinsically linked
graphs in $\mathbb{R}P^3$ with connectivity 0, 1, and 2, assuming that
$K_{4,4} \setminus \{e\}$ is intrinsically linked in $\mathbb{R}P^3$.

\begin{prop}
	Let $G=A \cup B$ be a 2-connected graph with vertex cut set 
	$V(A \cap B) = \{v_1,v_2\}$. Let $\overline{A} = A \cup \{(v_1,v_2)\}$ and
	$\overline{B} = B \cup \{(v_1,v_2)\}$. If $G$ is minor-minimal intrinsically
	linked in $\mathbb{R}P^3$, then $\overline{A}$ and $\overline{B}$ are
	intrinsically linked in $\mathbb{R}^3$.
\end{prop}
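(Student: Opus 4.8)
The plan is to argue the contrapositive and build an explicit linkless embedding of $G$ in $\mathbb{R}P^3$. By the symmetry of $A$ and $B$ it suffices to treat $\overline{A}$, so suppose toward a contradiction that $\overline{A}$ is \emph{not} intrinsically linked in $\mathbb{R}^3$. Because $G$ is $2$-connected with cut set $\{v_1,v_2\}$, the subgraph $A$ contains a $v_1$--$v_2$ path; contracting such a path to a single edge and deleting the remainder of $A$ exhibits $\overline{B}$ as a minor of $G$, and a \emph{proper} minor as long as $A$ is not itself a single edge (which we may assume, else the decomposition is trivial and the claim is vacuous). Minor-minimality of $G$ then forces $\overline{B}$ to fail to be intrinsically linked in $\mathbb{R}P^3$, so $\overline{B}$ admits a linkless embedding $g\colon\overline{B}\to\mathbb{R}P^3$. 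On the other side, a linkless embedding of $\overline{A}$ in $\mathbb{R}^3$ may be taken to be flat (Robertson--Seymour--Thomas), so that every cycle bounds a disk meeting the graph only along that cycle; deleting a neighborhood of the edge $e=(v_1,v_2)$ then realizes $A$ inside a ball $N$ with $v_1,v_2\in\partial N$.

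First I would splice these two pictures together. In the image $g(\overline{B})$ I choose a small ball $N'$ meeting $g(\overline{B})$ in an unknotted sub-arc of the edge $e$, excise its interior, and glue $(N,A)$ in its place, matching the two cut points of $e$ to $v_1$ and $v_2$. Since a ball is exchanged for a ball, the ambient manifold remains $\mathbb{R}P^3$, while the two copies of $v_1$ (respectively $v_2$) are joined and the edge $e$ is absorbed, so the resulting graph is exactly $G=A\cup B$. After a small isotopy the sphere $S=\partial N'$ meets $G$ precisely in $\{v_1,v_2\}$, with $A$ inside the ball bounded by $S$ and $B$ outside it. I would then check that this embedding contains no non-trivial two-component link, contradicting that $G$ is intrinsically linked in $\mathbb{R}P^3$.

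Every cycle of $G$ lies entirely in $A$, entirely in $B$, or is \emph{mixed}, using a $v_1$--$v_2$ path in each part; in particular two disjoint cycles cannot both be mixed, since both would have to pass through $v_1$. For a pair of disjoint cycles I run through the remaining cases. Two $A$-cycles or two $B$-cycles are unlinked because they come from the flat embedding of $\overline{A}$ or the linkless embedding of $\overline{B}$. An $A$-cycle together with a $B$-cycle is separated by $S$ and hence unlinked in the sense of Definition~\ref{defn:unlinked}. A mixed cycle together with an $A$-cycle $C'$ is unlinked because flatness provides a disk bounded by $C'$ inside $N$ and disjoint from the rest of $A$, hence from the mixed cycle.

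The hard part will be the last case, a mixed cycle $C$ together with a $B$-cycle $C'$, where the homology of $\mathbb{R}P^3$ intrudes: the portion of $C$ lying in $B$ may close up through the line at infinity, so $C$ can be $1$-homologous and need not bound anything. The observation I would exploit is that $C$ and the genuine $\overline{B}$-cycle $\hat{C}$ obtained by closing $C\cap B$ with the edge $e$ differ only along arcs lying inside $N'$, and that $N'$ is disjoint from $C'$ (a $B$-cycle avoiding $v_1,v_2$ meets neither $e$ nor $N'$). Two arcs in a ball with common endpoints are isotopic rel endpoints, so $C$ is isotopic to $\hat{C}$ in $\mathbb{R}P^3\setminus C'$; since $\hat{C}\cup C'$ is unlinked in the linkless embedding of $\overline{B}$, so is $C\cup C'$. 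A secondary technical point I would need to dispatch is that Definition~\ref{defn:unlinked} also demands each component be unknotted: on the $\overline{A}$ side this is free from flatness, while on the $\overline{B}$ side it requires knowing its $\mathbb{R}P^3$ embedding can be chosen with unknotted cycles, which I would arrange alongside linklessness.
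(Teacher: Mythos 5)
Your construction is the same one the paper uses: assume $\overline{A}$ is not intrinsically linked in $\mathbb{R}^3$, note that $\overline{B}$ is a proper minor of $G$ (the paper asserts this without justification; your path-contraction argument supplies it), invoke minor-minimality to get a linkless embedding of $\overline{B}$ in $\mathbb{R}P^3$, and then insert an $\mathbb{R}^3$-linkless embedding of $\overline{A}$ into a small ball around the edge $(v_1,v_2)$, matching up the two copies of that edge. The only cosmetic difference is that the paper keeps the edge and produces a linkless embedding of $G\cup\{(v_1,v_2)\}$, which restricts to one of $G$, whereas you absorb the edge. Where you genuinely go beyond the paper is in verifying that the spliced embedding is linkless: the paper asserts this in a single sentence, while you run the case analysis over pairs of disjoint cycles, which is exactly the content being suppressed.

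One step of that verification is, however, wrong as stated: ``two arcs in a ball with common endpoints are isotopic rel endpoints'' is false --- a knotted spanning arc of $B^3$ is not ambient isotopic rel endpoints to an unknotted one, and ambient isotopy is what you need in order to transfer the link type of $C\cup C'$ to $\hat{C}\cup C'$. The repair is the flatness you already imported from Robertson--Seymour--Thomas: in a flat embedding of $\overline{A}$, the cycle $(C\cap A)\cup(v_1,v_2)$ bounds a disk whose interior misses the graph, that disk lies inside the ball and hence misses $C'$ and $C\cap B$, and sliding $C\cap A$ across it onto the edge gives the required ambient isotopy in the complement of $C'$. (Note that the paper takes its embedding $h$ of $\overline{A}$ merely linkless rather than flat, so it has no access to such a disk; your use of flatness is needed, not optional.) Your closing ``secondary technical point'' is also a genuine gap rather than a formality: Definition \ref{defn:unlinked} requires both components of an unlink to be unknots, and linklessness of the $\overline{B}$ embedding forces a cycle of $\overline{B}$ to be unknotted only when it is disjoint from some other cycle of $\overline{B}$; a knotted cycle of $B$ that meets every cycle of $\overline{B}$ but is disjoint from a cycle of $A$ would make the spliced embedding linked. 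You defer this (``which I would arrange alongside linklessness'') without any argument that such an embedding exists, so your proof is incomplete at that point --- though in fairness the paper's one-line assertion of linklessness leaves exactly the same issue unaddressed.
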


\begin{proof}
	Suppose $\overline{A}$ is not intrinsically linked in $\mathbb{R}^3$. Since
	$G$ is minor-minimal, $\overline{B} < G$ has a linkless embedding, $f$, in
	$\mathbb{R}P^3$. Let $g$ be an embedding of a closed 3-ball with interior
	$D$ into $\mathbb{R}P^3$ such that $f((v_1,v_2)) \subset g(\overline{D})$,
	only the vertices $v_1$ and $v_2$ intersect $\partial D$. and
	$f(\overline{B}\setminus \{(v_1,v_2)\})$ is in the complement of $g(D)$.
	Take a linkless
	embedding, $h$, of $\overline{A}$ in $\mathbb{R}^3 \cong D$. Then,
	$g \circ h$ is a linkless embedding of $\overline{A}$. Using ambient isotopy
	on $g \circ h$, we may assume that the arcs $f((v_1,v_2))$ and
	$g \circ h((v_1,v_2))$ coincide. The union of these two embeddings produces
	a linkless embedding of $G \cup (v_1,v_2)$ into $\mathbb{R}P^3$.
\end{proof}

\begin{prop}
	Let $G = (P_1 \cup P_2) \setminus \{(v_1,v_2)\}$ be a graph, where
	$P_1,P_2\in\mathcal{P}$ and $V(P_1 \cap P_2) = \{v_1,v_2\}$. Then
	$G$ is intrinsically linked in $\mathbb{R}P^3$.
\end{prop}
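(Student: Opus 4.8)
The plan is to take an arbitrary embedding $f\colon G\to\mathbb{R}P^3$ and show it must contain a non-trivial two-component link. Write $H_i = P_i\setminus\{(v_1,v_2)\}$ for $i=1,2$, so that $G=H_1\cup H_2$ and $H_1$ and $H_2$ meet only in the two vertices $v_1,v_2$ (the edge $(v_1,v_2)$, if present in either $P_i$, has been deleted). The key observation is that a cycle avoiding $v_i$ can never traverse the deleted edge, so its homology class under $f$ is recorded directly by $f|_{H_i}$. The argument then splits on whether the hypothesis of Lemma~\ref{lemma:P-v} can be made to hold for one of the two constituent Petersen-family graphs.

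First suppose that for some $i$ --- say $i=1$ --- every cycle of $H_1\setminus\{v_1\}$ is $0$-homologous under $f$. I would produce a genuine embedding of $P_1$. If $(v_1,v_2)\notin E(P_1)$ then $H_1=P_1$ and $f$ already restricts to such an embedding; otherwise I reconstruct one by rebuilding the deleted edge from a $v_1$--$v_2$ path $\gamma$ lying in $H_2$ (such a path exists since $P_2$ is $3$-connected, so deleting the single edge $(v_1,v_2)$ leaves a $v_1$--$v_2$ path) whose interior is disjoint from $H_1$. Then $H_1\cup\gamma$ is an expansion of $P_1$, and contracting $\gamma$ turns $f|_{H_1\cup\gamma}$ into an embedding of $P_1$. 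Every cycle of $P_1\setminus\{v_1\}$ avoids $v_1$, hence avoids the contracted edge, and so coincides with a cycle of $H_1\setminus\{v_1\}$; all such cycles are $0$-homologous by assumption. Lemma~\ref{lemma:P-v} then yields a non-trivial link in this $P_1$-embedding. Since expansion preserves the isotopy type of cycles and links, the link descends to $f(H_1\cup\gamma)\subseteq f(G)$, and its two components remain disjoint after un-contracting $\gamma$: a component using the contracted edge contains both $v_1$ and $v_2$, so the other component meets neither and therefore avoids the interior of $\gamma$. The cases with $i=2$ and with $v_2$ removed are identical.

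In the remaining case, for each choice there is a $1$-homologous cycle; applying this to $H_1$ with $v_1$ removed and to $H_2$ with $v_2$ removed gives $1$-homologous cycles $C_1\subseteq H_1\setminus\{v_1\}$ and $C_2\subseteq H_2\setminus\{v_2\}$. These are vertex-disjoint: any common vertex lies in $V(P_1)\cap V(P_2)=\{v_1,v_2\}$, but $C_1$ omits $v_1$ and $C_2$ omits $v_2$, forcing $C_1\cap C_2=\emptyset$. Thus $C_1$ and $C_2$ form a genuine two-component link in $f(G)$. A $1$-homologous cycle cannot be contained in any ball, since a cycle confined to a $3$-ball bounds a disc and would be null-homologous; hence no ball $A$ can isolate either component, and by Definition~\ref{defn:unlinked} the pair $C_1\cup C_2$ is a non-trivial link. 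Either way, $f(G)$ contains a non-trivial link, so $G$ is intrinsically linked in $\mathbb{R}P^3$.

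The step I expect to be the main obstacle is the bookkeeping in the first case: ensuring that the link delivered by Lemma~\ref{lemma:P-v} for the reconstructed $P_1$ descends to an \emph{honest} pair of disjoint cycles of $G$, rather than a link using an auxiliary arc not present in $G$. This is precisely why the missing edge must be rebuilt from a path $\gamma$ inside $H_2$ and why the internal vertices of $\gamma$ must be shown to be disjoint from the second component. By contrast, the homology dichotomy is routine once one notes that cycles omitting $v_i$ never meet the deleted edge, and the only genuinely geometric inputs are Lemma~\ref{lemma:P-v} and the fact that a $1$-homologous cycle cannot be confined to a ball.
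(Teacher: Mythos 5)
Your proof is correct and takes essentially the same approach as the paper: the paper's (much terser) argument notes that $P_1$ and $P_2$ are minors of $G$, applies Lemma~\ref{lemma:P-v} to each, and concludes that either some $P_i$ already yields a non-trivial link or else $P_1\setminus\{v_1\}$ and $P_2\setminus\{v_2\}$ contain disjoint 1-homologous cycles, hence a non-trivial link. Your explicit reconstruction of the missing edge via a path $\gamma$ in the other side, the transfer of the homology hypothesis, and the observation that a 1-homologous cycle cannot lie in a ball are precisely the details the paper leaves implicit.
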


\begin{proof}
	Notice that both $P_1$ and $P_2$ are minors of $G$. Embed $G$ in
	$\mathbb{R}P^3$. By Lemma \ref{lemma:P-v},
	if $P_i$ does not contain any non-trivial links, then
	$P_i \backslash \{v_i\}$ must contain a 1-homologous cycle, for $i=1,2$.
	This results in two disjoint 1-homologous cycles. Hence, $G$ is linked.
\end{proof}

The previous two propositions prove Theorem \ref{thm:lowconnectivity} for
$k=2$, assuming Theorem \ref{thm:k44-e}. The results for $k=0$ and $k=1$ are
proved similarly, and Theorem \ref{thm:k44-e} is proved in the following
section.

For the case $k=0$, it is easy to see that there are $\left({6 \choose 2}\right)
=21$ minor-minimal intrinsically linked graphs in $\mathbb{R}P^3$. When
$k=1$, it is necessary to
count the different number of vertex classes in each graph to determine the
number of ways a pair of Petersen-family graphs may be glued along a vertex.
From Table \ref{tbl:vertexclasses}, the number of minor-minimal intrinsically
linked graphs with 1-connectivity in $\mathbb{R}P^3$ is determined to be 91.

\begin{table}
	\begin{center}
		\begin{tabular}{cc}
			\hline
			Graph &  Vertex Classes\\
			\hline
			$K_6$ & 1\\
			$K_{3,3,1}$ & 2\\
			$P_7$ & 3\\
			$P_8$ & 4\\
			$P_9$ & 2\\
			Petersen & 1\\
			\hline
		\end{tabular}
		\caption{Petersen-family graphs and the number of vertices, up to
			equivalence under graph isomorphism.}
		\label{tbl:vertexclasses}
	\end{center}
\end{table}

Define the \textit{vertex flipping number (VFN)} for some vertex pair
$\{x_1, x_2\}$ as
\begin{equation*}
	VFN(x_1, x_2) =
		\begin{cases}
			0 & \text{if $x_1 ~ x_2$}\\
			1 & \text{otherwise}
		\end{cases}
\end{equation*}
where $x_1 ~ x_2$ is equivalence under a graph isomorphism. Counting the
number of minor-minimal intrinsically linked graphs in Theorem
\ref{thm:lowconnectivity} when $k=2$ requires attention to the $VFN$ of
vertex pair classes, where two pairs of vertices are equivalent if there is
a graph isomophism taking one pair to the other. For each pair
$\{x_1,x_2\} \subseteq E(G_1), \{y_1,y_2\}
\subseteq E(G_2)$ of vertex pair classes for two graphs $G_1, G_2$, the number
of ways to glue $G_1$ and $G_2$ along the specified vertex pairs is
$VFN(x_1,x_2) VFN(y_1,y_2) + 1$. Table \ref{tbl:pairclasses} lists the number
of vertex pair classes of each type, and number of minor-minimal intrinsically
linked graphs in $\mathbb{R}P^3$ is 469.

\begin{table}
	\begin{center}
		\begin{tabular}{|c|c|c|c|}
			\hline
			 &  \multicolumn{3}{c|}{Vertex Pair Classes}\\
			Graph & Total & $VFN = 0$ & $VFN = 1$\\
			\hline
			$K_6$ & 1 & 1 & 0\\
			$K_{3,3,1}$ & 3 & 2 & 1\\
			$P_7$ & 5 & 2 & 3\\
			$P_8$ & 10 & 3 & 7\\
			$P_9$ & 6 & 4 & 2\\
			Petersen & 2 & 2 & 0\\
			\hline
		\end{tabular}
		\caption{Petersen-family graphs and the number of vertex pairs, up to
			equivalence under graph isomorphism.}
		\label{tbl:pairclasses}
	\end{center}
\end{table}

\section{$K_{4,4}$ With an Edge Removed}

In this section, we prove that the graph obtained by removing an edge from
$K_{4,4}$ is intrinsically linked in $\mathbb{R}P^3$.

We will need the following observation.

\begin{prop} \label{prop:k4}
	For every embedding into $\mathbb{R}P^3$, $K_{3,2}$ has an even number of
	1-homologous 4-cycles.
\end{prop}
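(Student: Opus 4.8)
The plan is to reduce the statement to a single linear-algebra identity over $\mathbb{Z}/2\mathbb{Z}$, exploiting the fact that the homology class of an embedded cycle depends only on its edge set through a homomorphism. First I would pin down the combinatorics of $K_{3,2}$. Writing the two color classes as $\{a_1,a_2,a_3\}$ and $\{b_1,b_2\}$, every $4$-cycle must use both of $b_1,b_2$ and exactly two of the $a_i$, so there are precisely $\binom{3}{2}=3$ four-cycles, one for each pair $\{a_i,a_j\}$, namely $C_{ij}=a_i b_1 a_j b_2$, which traverses all four edges joining $\{a_i,a_j\}$ to $\{b_1,b_2\}$. The key observation is that each edge $(a_i,b_\ell)$ lies in exactly those cycles $C_{ij}$ whose index pair contains $i$, and there are exactly two such pairs. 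Hence, in the mod-$2$ cycle space $Z_1(K_{3,2};\mathbb{Z}/2\mathbb{Z})$, every edge appears in exactly two of the three $4$-cycles, so that $C_1+C_2+C_3=0$.

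Next I would recall the homological input. For an embedding $f$, the $\mathbb{Z}/2\mathbb{Z}$-homology class of $f(C)$ in $H_1(\mathbb{R}P^3;\mathbb{Z}/2\mathbb{Z})\cong\mathbb{Z}/2\mathbb{Z}$ is its mod-$2$ intersection number with the fixed $\mathbb{R}P^2$ at infinity; in the disc projection this is simply the parity of the number of times the image crosses the bounding equator. Since this intersection number is additive over arcs meeting at interior vertices, it defines a value $\epsilon(e)\in\mathbb{Z}/2\mathbb{Z}$ for each edge (the parity of boundary crossings of $f(e)$), and the class of any cycle $C$ is $h(C)=\sum_{e\in C}\epsilon(e)$. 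Equivalently, $h$ is the induced homomorphism $f_*\colon H_1(K_{3,2};\mathbb{Z}/2\mathbb{Z})\to H_1(\mathbb{R}P^3;\mathbb{Z}/2\mathbb{Z})$, and a cycle is $1$-homologous precisely when its class is the nontrivial element. A cycle's homology class is unaffected by the crossing changes used to identify its type, since those occur in the interior of the disc and do not alter the boundary-crossing parity.

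Combining the two ingredients, apply the homomorphism $h$ to the relation $C_1+C_2+C_3=0$ to get $h(C_1)+h(C_2)+h(C_3)=h(0)=0$ in $\mathbb{Z}/2\mathbb{Z}$. The number of $1$-homologous $4$-cycles is the number of indices $i$ with $h(C_i)=1$, and this count is congruent mod $2$ to $h(C_1)+h(C_2)+h(C_3)=0$. Therefore $K_{3,2}$ has an even number of $1$-homologous $4$-cycles in every embedding into $\mathbb{R}P^3$.

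I expect the only genuine subtlety to be the justification that $h$ is well defined and linear over the cycle space, i.e.\ that assigning to each embedded cycle its class via the per-edge crossing parities $\epsilon(e)$ is consistent and additive. This is exactly the statement that mod-$2$ intersection with the $\mathbb{R}P^2$ at infinity descends to the homomorphism $f_*$ on $\mathbb{Z}/2\mathbb{Z}$-homology, a standard intersection-theoretic fact; once it is in hand, the combinatorial identity $C_1+C_2+C_3=0$ makes the conclusion immediate.
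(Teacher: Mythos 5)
Your proposal is correct and takes essentially the same approach as the paper: both arguments rest on the relation $C_1+C_2+C_3=0$ among the three $4$-cycles of $K_{3,2}$ in the mod-$2$ cycle space, together with the additivity of $\mathbb{Z}/2\mathbb{Z}$ homology classes under such sums. The paper phrases the additivity concretely via the arc-sum of two cycles sharing an arc (with the parity rules $0+0=0$, $1+1=0$, $0+1=1$), while you justify it as the induced homomorphism $f_*$ on $H_1$ computed by intersection parity with the $\mathbb{R}P^2$ at infinity; this is the same mechanism, just formalized differently.
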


\begin{proof}
	Whenever two cycles $C_1$ and $C_2$ intersect along an arc, $D$, we can
	define the sum of $C_1$ and $C_2$ to be $C_1 \cup C_2 \setminus D$. Then,
	the result can be obtained by noting that the sum of two 0-homo\-logous
	cycles and the sum of two 1-homologous cycles are 0-homologous cycles, and
	the sum of a 0-homologous cycle with a 1-homologous cycle is 1-homologous.
\end{proof}

\begin{lem} \label{thm:k33}
	If a graph $G$ isomorphic to $K_{3,3}$ is embedded in $\mathbb{R}P^3$ such
	that at least one of its cycles is 1-homologous, then the homology classes of
	all of the 4-cycles in the embedding of $G$ have one of two possibilities:
	\begin{enumerate}
		\item A cycle is 1-homologous if and only if it passes through a specified
			edge, $(u,v)$, of the graph. We call $(u,v)$ the
			\textit{including edge} and the homology pattern of the embedding a
			\textit{4-pattern}.
		\item A cycle is 1-homologous if and only if it does not pass through 
			two of the edges in $F \subset {E(G)}$, where $F$ is a specified set
			of three mutually disjoint edges of $G$. We call $F$ the set of
			\textit{excluding edges} and the homology pattern of the embedding 
			a \textit{6-pattern}.
	\end{enumerate}
\end{lem}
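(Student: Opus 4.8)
The plan centers on the homology algebra established in Proposition \ref{prop:k4}: in $\mathbb{R}P^3$, the homology class of a cycle lives in $H_1(\mathbb{R}P^3)\cong\mathbb{Z}/2\mathbb{Z}$, and the homology class of a sum of two cycles sharing an arc is the $\mathbb{Z}/2$ sum of their classes. The key observation is that $K_{3,3}$ is a bipartite graph, so every cycle has even length; the short cycles are the $4$-cycles and the longest are the $6$-cycles (Hamiltonian cycles). I would first set up a careful bookkeeping of how the nine edges of $K_{3,3}$ distribute among its $4$-cycles, and record for each edge a homology ``weight'' in $\mathbb{Z}/2\mathbb{Z}$ — namely, I want to show that the homology class of any cycle is determined by which edges it uses, via a fixed $\mathbb{Z}/2$-valued function on the edge set. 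This is the conceptual heart: once each edge carries a well-defined class additively, the $4$-cycle homology pattern is forced into a highly constrained form.

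The main step is to prove that such an edge-weight function exists and is consistent. I would define, for each edge $e$, a provisional class $w(e)\in\mathbb{Z}/2\mathbb{Z}$ and argue using Proposition \ref{prop:k4}-style arc-sum relations that the class of every $4$-cycle equals the sum of $w$ over its edges (mod $2$). Since $K_{3,3}$ has a rich supply of cycles sharing arcs — any two $4$-cycles that share a path of edges produce, upon summing, another even cycle whose class is constrained — the collection of all these relations over the cycle space pins down $w$ up to the global ambiguity allowed by $H_1$. Concretely, the cycle space of $K_{3,3}$ has dimension $9-6+1=4$, so the space of consistent $\mathbb{Z}/2$ homology assignments on cycles is at most $2^4$; the hypothesis that at least one cycle is $1$-homologous rules out the trivial all-zero pattern, and I expect the arc-sum constraints to collapse the remaining possibilities down to exactly the two families named in the statement.

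From the edge-weight description I would then read off the dichotomy. If the weight function $w$ is supported on a single edge $(u,v)$ (equivalently, exactly one edge has $w=1$), then a $4$-cycle is $1$-homologous precisely when it contains that edge — this is the \emph{4-pattern} with including edge $(u,v)$. The complementary structural case is when $w$ is supported on a perfect matching: a set $F$ of three mutually disjoint edges covering all six vertices. Then a $4$-cycle, which meets such a matching in either zero or two edges (an even cycle in a bipartite graph intersects any perfect matching in an even number of edges), is $1$-homologous exactly when it misses two of the three matching edges, i.e.\ uses exactly one of them an odd number of — here I must verify the parity count carefully, giving the \emph{6-pattern} with excluding set $F$. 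The bulk of the remaining work is a finite verification that no other support set for $w$ produces a consistent pattern: any support that is neither a single edge nor a perfect matching either violates an arc-sum relation among the $4$-cycles or forces every cycle to be $0$-homologous, contradicting the hypothesis.

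The hard part will be the consistency argument establishing that cycle homology factors through a well-defined additive edge weight — that is, ruling out assignments which satisfy Proposition \ref{prop:k4} on individual arc-sums but do not globally arise from a function on edges. I would handle this by choosing a spanning tree of $K_{3,3}$, expressing every fundamental cycle's class in terms of the four chord edges, and then checking that the non-fundamental $4$-cycles impose no new obstruction beyond forcing the support of $w$ into one of the two allowed shapes. Organizing the roughly twenty-four $4$-cycles and their pairwise arc-intersections into this tree-plus-chords framework is the most delicate bookkeeping, but once the edge-weight function is shown to be well defined, the classification into $4$-patterns and $6$-patterns is immediate from the combinatorics of matchings in $K_{3,3}$.
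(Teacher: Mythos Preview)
Your linear-algebra framing is sound and genuinely different from the paper's argument. The paper proceeds by a direct case analysis: it fixes a $1$-homologous $4$-cycle, uses symmetry to name it $a_1b_1a_2b_2$, and then repeatedly applies Proposition~\ref{prop:k4} to the various $K_{3,2}$ subgraphs, making WLOG choices until only two branches remain, which are read off as the $4$-pattern and $6$-pattern. Your approach --- observing that the induced map $Z_1(K_{3,3};\mathbb{Z}/2)\to H_1(\mathbb{R}P^3;\mathbb{Z}/2)$ is $\mathbb{Z}/2$-linear, extending it to an edge weight $w:E\to\mathbb{Z}/2$, and classifying the possibilities --- is more structural and explains \emph{why} exactly two shapes appear.

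There is, however, a real gap in your outline. You write that ``any support that is neither a single edge nor a perfect matching either violates an arc-sum relation \ldots\ or forces every cycle to be $0$-homologous.'' This is backwards: \emph{every} $w:E\to\mathbb{Z}/2$ defines a consistent linear functional on the cycle space --- none of them ``violates'' anything. The point you are missing is that $w$ is not unique: it is well-defined only modulo the $5$-dimensional coboundary space (the span of vertex stars), not modulo some one-dimensional ``ambiguity from $H_1$.'' The correct finishing move is a counting argument: the cycle space has dimension $4$, so there are exactly $2^4-1=15$ nonzero linear functionals on it; there are $9$ single edges and $6$ perfect matchings in $K_{3,3}$; one checks these $15$ weight functions are pairwise inequivalent modulo coboundaries, hence they exhaust all nonzero patterns. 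Your spanning-tree setup will produce a $w$ supported on the four chords, but that representative will typically be neither a single edge nor a matching --- you must then invoke the coboundary freedom, and the counting argument is the cleanest way to do so.

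Two smaller corrections: $K_{3,3}$ has nine $4$-cycles, not twenty-four; and your parenthetical ``an even cycle in a bipartite graph intersects any perfect matching in an even number of edges'' is false --- a $4$-cycle in $K_{3,3}$ meets a perfect matching in either one or two edges (never zero), which is exactly what makes the $6$-pattern description work once you compute that $\sum_{e\in C}\mathbf{1}_F(e)\equiv 1$ iff $C$ contains exactly one edge of $F$.
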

  
\begin{proof}
	Let $\{a_1,a_2,a_3\} \subset V(G)$ and $\{b_1,b_2,b_3\} \subset V(G)$ be the
	partition sets of $G$. Suppose $G$ contains a 1-homologous cycle. Then, it
	must contain a 1-homologous 4-cycle $C_1$. Let $H$ be a subgraph of $G$
	isomorphic to $K_{3,2}$ that contains $C_1$. By Proposition \ref{prop:k4},
	$H$ must contain two 1-homologous 4-cycles. Without loss of generality, they
	are the cycles $a_1b_1a_2b_2$ and $a_1b_1a_2b_3$. It also must
	be the case that the cycle $a_1b_2a_2b_3$ is 0-homologous.
	
	Now, consider the subgraph induced by $\{a_1, a_2, a_3, b_1, b_2\}$. By
	Proposition \ref{prop:k4}, one
	of the two cycles $a_1b_1a_3b_2$ and $a_2b_1a_3b_2$ is
	1-homologous, and the other is 0-homologous. Since interchanging $a_1$ and
	$a_2$ does not affect the choices made up to this point, without loss of
	generality, the cycle $a_1b_1a_3b_2$ is 1-homologous and the cycle
	$a_2b_1a_3b_2$ is 0-homologous.
	
	Next, consider the subgraph induced by $\{a_1,a_3,b_1,b_2,b_3\}$.
	Since the cycle $a_1b_1a_3b_2$ is
	1-homologous, then either the cycle $a_1b_1a_3b_3$ is also
	1-homologous and the cycle
	$a_1b_2a_3b_3$ is 0-homologous, or the cycle $a_1b_1a_3b_3$ is 0-homologous
	and the cycle $a_1b_2a_3b_3$ is 1-homo\-logous.
	
	\noindent\textbf{Case 1:} Cycle $a_1b_1a_3b_3$ is 1-homologous and
	cycle $a_1b_2a_3b_3$ is 0-homolo\-gous.
	
	Applying Proposition \ref{prop:k4} to all of the other subgraphs of $G$
	isomorphic to $K_{3,2}$ forces the last two cycles, $a_2b_1a_3b_3$
	and $a_2b_2a_3b_3$, to be 0-homologous. Observe that a cycle in $G$ is
	1-homologous if and only if it includes the edge $(a_1,b_1)$. Hence, this
	embedding of $G$ has a 4-pattern, with $(a_1,b_1)$ as its including edge.
	
	\noindent\textbf{Case 2:} The cycle $a_1b_1a_3b_3$ is 0-homologous and
	the cycle $a_1b_2a_3b_3$ is 1-homolo\-gous.
	
	Again, by using Proposition \ref{prop:k4} on the remaining $K_{3,2}$
	subgraphs of
	$G$, the cycles $a_2b_1a_3b_3$ and $a_2b_2a_3b_3$ must be
	1-homologous. A 4-cycle of $G$ is 0-homologous if and only if it contains
	two edges from the
	set $F = \{(a_1,b_3), (a_2,b_2), (a_3,b_1)\}$. The set $F$ is the set of
	excluding edges, and the embedding is a 6-pattern.
\end{proof}
  
\begin{reftheorem}{k44-e}
	The graph $G$ obtained by removing an edge from $K_{4,4}$ is minor-minimal
	intrinsically linked in $\mathbb{R}P^3$.
\end{reftheorem}

\begin{proof}
	Consider an embedding of $G = K_{4,4} \backslash \{(a_1,b_1)\}$, where
	\begin{equation*}
		\{a_1,a_2,a_3,a_4\}, \{b_1,b_2,b_3,b_4\} \subset V(G)
	\end{equation*}
	are the partition sets.
	
	Let $A$ be the subgraph induced by $\{a_2,a_3,a_4,b_2,b_3,b_4\}$, $B$
	be the subgraph induced by $\{a_1,a_2,a_3,b_2,b_3,b_4\}$, and $C$ be the
	subgraph induced by $\{a_2,a_3,a_4,b_1,b_2,b_3\}$. By Lemma \ref{thm:k33},
	$A$ contains no 1-homologous cycles, is a 4-pattern, or is a 6-pattern.
	
	\noindent\textbf{Case 1:} The subgraph $A$ contains no 1-homologous cycles.
	
	By Lemma \ref{lemma:P-v}, if the embedding is linkless, the subgraph induced
	by $\{a_1,a_2,a_3,a_4,b_2,b_3,b_4\}$ must contain a
	1-homologous cycle. Because $A$ does not contain any 1-homologous cycles, all
	such cycles must pass through $a_1$. Consider the subgraph induced by
	$\{a_1,a_2,a_3,b_2,b_3,b_4\}$. This $K_{3,3}$ subgraph must then have a
	4-pattern. Without loss of generality, the including edge is $(a_1,b_2)$.
	
	Similarly, the subgraph induced by $\{a_2,a_3,a_4,b_1,b_2,b_3\}$ contains a
	4\-pattern with including edge $(a_2,b_1)$. Then, $a_1b_2a_2b_3$ and
	$b_1a_2b_2a_3$ are disjoint 1-homolo\-gous cycles.
	
	\noindent\textbf{Case 2:} The subgraph $A$ contains a 4-pattern.
	
	Without loss of generality, $A$ has $(a_4,b_4)$ as its including edge.
	
	\noindent\textit{Subcase 2.1:} Either $B$ or $C$ has a 6-pattern.

	The subgraph $B$ cannot have a 6-pattern
	as then subgraph induced by $\{a_2,a_3,b_2,b_3,b_4\}$ would contain a
	1-homologous cycle,
	contradicting that all 1-homologous cycles in $A$ pass through
	its including edge.

	\noindent\textit{Subcase 2.2:} Both $B$ and $C$ contain no 1-homologous
	cycles.
	
	It is easy
	to see that all 1-homologous cycles of $G$ must pass through the edge
	$(a_4,b_4)$ by looking at the other four $K_{3,3}$
	subgraphs of $G$ and noticing that each subgraph must have a 1-homologous
	cycle by the including edge in $A$. If any subgraph of $G$ (not including
	$B$ and $C$) has a 6-pattern or a 4-pattern with an including edge that is
	not $(a_4,b_4)$, then this would force a 1-homologous cycle in $B$ or $C$.
	By Lemma \ref{lemma:P-v}, since all 1-homologous cycle pass through $a_4$,
	$G$ is linked.

	\noindent\textit{Subcase 2.3:} Both $B$ and $C$ have 4-patterns.

	If $B$ contains a 4-pattern, then its including edge must pass through
	$a_1$. Otherwise, $A$ contains a 1-homologous cycle disjoint from its
	including edge. Similarly, if $C$ contains a 4-pattern, then its including
	edge must pass through $b_1$. The subgraph $B$ has its including
	edge passing through $a_1$ and $C$ has its including edge passing through
	$b_1$. So we can find disjoint 1-homologous cycles in $G$.

	\noindent\textit{Subcase 2.4:} One of $B$ or $C$ has a 4-pattern and the
	other contains no 1-homologous cycles.

	Without loss of generality, assume that $B$ has a 4-pattern and $C$ contains
	no 1-homologous cycles. By the previous subcase, the including edge in
	$B$ has $a_1$ as an endpoint. We claim that the subgraphs induced by
	$\{a_2,a_3,a_4,b_1,b_2,b_4\}$ and
	$\{a_2,a_3,a_4,b_1,b_3,b_4\}$ must have 4-patterns: both contain
	1-homologous cycles due to $A$ having a 4-pattern, and if either contained a
	6-pattern, there would be a 1-homologous cycle in $C$. Any edge with $b_1$
	as an endpoint cannot be an including edge for these two graphs, since then
	$C$ would contain a 1-homologus cycle. Consequently,
	both subgraphs must have $(a_4,b_4)$ as its including edge. Otherwise,
	there would be a 1-homologous 4-cycle in $A$ that does not have $(a_4,b_4)$
	as one of its edges.
	
	If the including edge in $B$ does not have $b_4$ as its other endpoint,
	because the subgraph induced by $\{a_2,a_3,a_4,b_1,b_2,b_4\}$ has
	$(a_4,b_4)$ as its including
	edge, $G$ contains disjoint 1-homologous links. Otherwise, since the cycles
	$a_ib_2a_4b_4$ and $a_ib_3a_4b_4$ are 1-homologous from $A$ and cycles
	$a_1b_2a_ib_4$ and $a_1b_3a_ib_4$ are 1-homologous
	from $B$, then
	the subgraph induced by $\{a_1,a_i,a_4,b_2,\linebreak b_3,b_4\}$ has a
	4-pattern with
	$(a_i,b_4)$ as its including edge, for $i=2,3$. In this case, we have shown
	that all 1-homologous
	cycles pass through $b_4$, so by Lemma \ref{lemma:P-v}, $G$ is linked.
	
	\noindent\textbf{Case 3:} The subgraph $A$ has a 6-pattern.
	
	Without loss of generality, the
	excluding edges in $A$ are $(a_i,b_i)$ for $i=2,3,4$.
	Then, every $K_{3,3}$ subgraph of $G$ shares a $K_{3,2}$ with $A$, so it must
	contain a 1-homologous cycle.

	\noindent\textit{Subcase 3.1:} Both $B$ and $C$ contain 4-patterns.
	
	If $B$ contains a 4-pattern, its including edge must pass through $b_4$.
	Otherwise, $B$ contains a 1-homologous cycle from the 6-pattern in $A$ that
	does not pass through its
	including edge. Since the subgraph induced by $\{a_2,a_3,b_2,b_3,\linebreak 
	b_4\}$
	contains a 1-homologous cycle by $A$, then $B$ has it including edge passing
	through $a_2$ or $a_3$. Let $(a_i,b_4)$ be the including edge in $B$.
	
	Likewise, if $C$ has a 4-pattern, its including edge must be $(a_4,b_j)$,
	where $j=2$ or $3$. Then, it is easy to see that $G$ contains disjoint
	1-homologous cycles. If $C$ has a 6-pattern, then let $k=2,3$, $k \neq i$.
	Then, the subgraph induced by $\{a_k,a_4,b_1,b_2,b_3\}$ contains a
	1-homologous 4-cycle, one of which must pass through $b_1$. The 4-cycle
	that is disjoint from this cycle is also 1-homologous by the including edge
	in $B$, so $G$ is linked.

	\noindent\textit{Subcase 3.2:} Either $B$ or $C$ contain a 6-pattern.

	Without loss of generality, assume that $B$ has a 6-pattern. One of its
	excluding edges must be $(a_1,b_4)$
	since cycle $a_2b_2a_3b_3$ is 0-homologous by $A$, and $(a_1,b_4)$ is the
	only edge in $B$ that is disjoint from this cycle.
	Note that if $(a_2,b_2)$ and $(a_3,b_3)$
	are also excluding edges, then all cycles in the subgraph induced by
	$\{a_1,a_2,a_4,b_2,b_3,b_4\}$
	through $(a_2,b_3)$ are 1-homologous. We saw in the Subcase 2.1 that
	when there
	is a 4-pattern in a $K_{3,3}$ that is one adjacent (differs by one vertex)
	to a $K_{3,3}$ with a 6-pattern, then the graph is linked. Otherwise,
	$(a_2,b_3)$ and $(a_3,b_2)$ are the other excluding edges in $B$.
	
	Similarly, if $G$ does not contain any non-trivial links, then $C$ must have
	$(a_4,b_1)$, $(a_2,b_3)$, and $(a_3,b_2)$ as
	excluding edges. Hence, $a_1b_4a_2b_2$ and 
	$a_4b_1a_3b_3$ are disjoint 1-homologous cycles. So $G$ is linked.
	
	The graph $G$ is minor-minimal since any proper minor of $G$ embeds in the
	projective plane \cite{glover79,archdeacon83}.
\end{proof}

\section{$K_7$ Minus Two Edges}

We now prove that any graph obtained by removing two edges from $K_7$ is
minor-minimal intrinsically linked in $\mathbb{R}P^3$. There are two cases of
Theorem \ref{thm:k7-2e}: when the two edges are adjacent and when the two
edges are non-adjacent. We will use the following lemma.

\begin{lem}\label{k4lem}
	Given a linkless embedding of $K_6$, no $K_4$ subgraph can have all
	0-homologous cycles.
\end{lem}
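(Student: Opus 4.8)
My plan is to argue by contradiction, so I would suppose there is a linkless embedding $f$ of $K_6$ in $\mathbb{R}P^3$ together with a $K_4$ subgraph, on vertices $\{w,x,y,z\}$ say, all of whose cycles are $0$-homologous. The engine of the proof will be Lemma~\ref{lemma:P-v}: since $K_6$ is a Petersen-family graph, if every cycle of $K_6 \setminus \{v\}$ is $0$-homologous for some vertex $v$, then $f(K_6)$ contains a non-trivial link, contradicting linklessness. So the real content is to leverage the hypothesis on the $K_4$ to force \emph{all} cycles through the two remaining vertices to be $0$-homologous as well, thereby producing a vertex $v$ whose deletion leaves only $0$-homologous cycles.

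\textbf{Propagating the homology information.}
Let $u,v$ be the two vertices of $K_6$ outside the $K_4$. The key tool is the cycle-sum observation from Proposition~\ref{prop:k4}: the homology class (mod $2$) is additive under the operation of summing two cycles that meet along an arc, since $H_1(\mathbb{R}P^3)\cong\mathbb{Z}/2\mathbb{Z}$. Every $4$-cycle and $3$-cycle in $K_6$ can be written as a mod-$2$ sum of the triangles and quadrilaterals of the $K_4$ on $\{w,x,y,z\}$ together with a controlled number of cycles that use $u$ or $v$. First I would show that the three triangles meeting at any single vertex of the $K_4$, being $0$-homologous, force the relevant edge-cycle relations; then I would try to express any cycle avoiding one chosen vertex, say $v$, as a sum of $0$-homologous $K_4$-cycles and conclude it is itself $0$-homologous. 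The aim is to establish that $K_6 \setminus \{v\}$ has all $0$-homologous cycles for a suitable choice of $v$ among $\{u,v\}$, at which point Lemma~\ref{lemma:P-v} finishes the argument.

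\textbf{Where the difficulty lies.}
The main obstacle is that deleting one vertex of $K_6$ leaves a $K_5$, which contains cycles using both of the ``extra'' vertices relative to the original $K_4$, and these are not directly controlled by the $K_4$ hypothesis. The hypothesis only pins down the homology of cycles lying entirely within the four $K_4$-vertices. So the crux is to show that the additive structure still forces every cycle of the residual $K_5$ into the $0$-homologous class, or else to argue that if some such cycle is $1$-homologous, one can instead locate a \emph{second} disjoint $1$-homologous cycle and link them directly. I expect the cleanest route is to classify, via Proposition~\ref{prop:k4} applied to the various $K_{3,2}$ and $K_4$ subgraphs, how a single $1$-homologous cycle through $u$ or $v$ would propagate; an analysis in the same spirit as Lemma~\ref{thm:k33} should show that the only consistent pattern compatible with the all-$0$-homologous $K_4$ produces either a vertex whose deletion kills all $1$-homology (invoking Lemma~\ref{lemma:P-v}) or an explicit pair of disjoint $1$-homologous cycles. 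Either outcome contradicts the assumed linkless embedding, completing the proof.
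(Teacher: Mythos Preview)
Your proposal is a plan rather than a proof: you correctly isolate the two weapons that will be needed (Lemma~\ref{lemma:P-v} and the fact that two disjoint $1$-homologous cycles are automatically non-splittable in $\mathbb{R}P^3$), but the decisive case analysis is deferred to phrases like ``I expect'' and ``should show.'' The paper does not attempt your purely homological bookkeeping; instead it uses crossing changes and ambient isotopy to place the embedded $K_6$ in a concrete projective diagram (the $K_4$ affine in the interior of the disc, the remaining vertices $v_5,v_6$ on the line at infinity, each edge from $v_5$ or $v_6$ going to one of two antipodal positions), and then reads off the homology classes of all triangles from this finite combinatorial data. That geometric normal form is doing real work that your additivity arguments do not obviously replace.

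More seriously, the dichotomy you are banking on---either a pair of disjoint $1$-homologous triangles, or a vertex $p$ with every cycle of $K_6\setminus\{p\}$ being $0$-homologous---is \emph{not exhaustive}. Here is a homology pattern consistent with your hypothesis for which neither alternative holds. With the $K_4$ on $\{w,x,y,z\}$ and the extra vertices $u,v$, take the $1$-cochain $h$ that is $1$ exactly on the three edges $uv,uw,vw$ and $0$ elsewhere; this vanishes on every $K_4$-edge, so every $K_4$-cycle is $0$-homologous as assumed. A triangle is then $1$-homologous precisely when it contains at least two of $u,v,w$. Any two such triangles share a vertex of $\{u,v,w\}$, so there is no disjoint $1$-homologous pair; yet for every vertex $p$ the graph $K_6\setminus\{p\}$ still contains a $1$-homologous triangle (remove $u$ and $vwx$ remains, remove $w$ and $uvx$ remains, remove $x$ and $uvw$ remains). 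Thus Lemma~\ref{lemma:P-v} does not fire either. Your Proposition~\ref{prop:k4}/Lemma~\ref{thm:k33}-style propagation will land you in exactly this configuration in one branch of the case analysis, and at that point a genuinely different argument is required---either a direct $\lambda$-invariant computation in the spirit of the \emph{proof} of Lemma~\ref{lemma:P-v}, or something like the paper's explicit diagrammatic normal form. As written, your outline does not supply this missing step.
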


\begin{proof}
Consider an embedding of $K_6$ for which there is a $K_4$ subgraph with all cycles 0-homologous.  By using crossing changes and ambient isotopy, this $K_4$ subgraph can be deformed so that it does not touch the line at infinity, and so that there are no crossings on it in a projection.  Denote the vertices of this $K_4$ by $\{v_1,v_2,v_3,v_4\}$ and denote the vertices not in the $K_4$ by $v_5$ and $v_6$.  One can deform the edge $(v_5,v_6)$ so that it is contained in the line at infinity, so that $v_6$ is placed at 12 o'clock and 6 o'clock, and so that $v_5$ is placed at 3 o'clock and 9 o'clock.  We may assume the edge $(v_5,v_6)$ goes from 12 o'clock to 3 o'clock (see Figure \ref {setup}).  

\begin{figure}[htb]
	\begin{center}
		\includegraphics[scale=1]{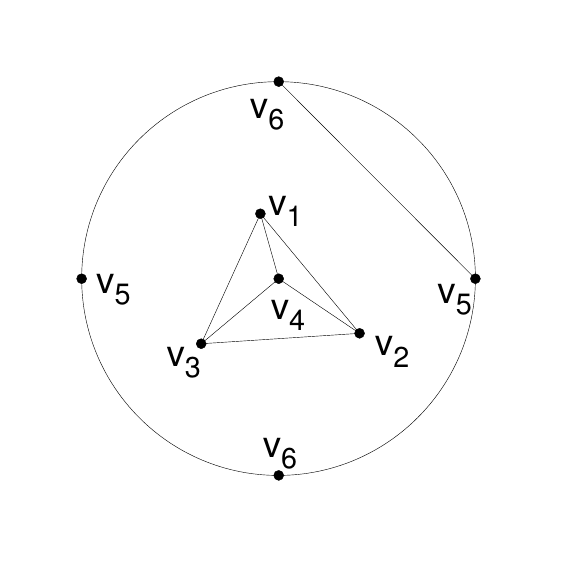}
	\end{center}
	\caption{We may deform the embedded graph to be in this position (not all edges are shown.)}
	\label{setup}
\end{figure}

Now, we claim that the edges connecting $v_6$ to the $K_4$ can be deformed
(using crossing changes and ambient isotopy) so that they are straight lines in
the projection that connect to the $K_4$ either from 12 o'clock, or from 6
o'clock.  We may assume that the edge connecting to $v_4$ is under all of the
other edges of the $K_4$ in the projection.  We will justify the claim for the
edge $(v_6,v_1)$.  Consider the embedded cycle formed the two (additional)
edges $e_1$ and $e_2$, where $e_1$ connects $v_1$ to the 12 o'clock $v_6$, and
$e_2$ connects $v_1$ to the 6 o'clock $v_6$, where both $e_1$ and $e_2$ are
straight edges in the projection.  This cycle is 1-homologous.  The edge
$(v_1,v_6)$ from the $K_6$ embedding breaks up this cycle into two cycles, one
formed by $e_1$ and $(v_1,v_6)$ and the other formed by $e_2$ and $(v_1,v_6)$.
One of these two cycles must be 1-homologous, and the other must be
0-homologous.  If the cycle formed by $e_1$ and $(v_1,v_6)$ is 0-homologous,
then $(v_1,v_6)$ can be deformed, using crossing change and ambient isotopy, to
$e_1$.  Similarly, if the cycle formed by $e_2$ and $(v_1,v_6)$ is 0-homologous,
then $(v_1,v_6)$ can be deformed to $e_2$.  This established our claim.  It
similarly follows that the edges connecting $v_5$ to the $K_4$ can be deformed
(using crossing changes and ambient isotopy) so that they are straight lines in
the projection that connect to the $K_4$ from either 3 o'clock or 9 o'clock.

Now, it cannot be the case that all of the edges connecting $v_6$ to the $K_4$
are incident to 12 o'clock, for then $v_1, v_2, v_3, v_4$ and $v_6$ would
induce a $K_5$ with all cycles 0-homologous, which cannot occur in a linkless
embedding of $K_6$ by Lemma \ref{lemma:P-v}.  Similarly, all of the edges cannot
be incident 6 o'clock, nor can all of the edges emanating from $K_5$ be incident
to 3 o'clock, nor can they all be incident to 9 o'clock.  Thus, there must be
exactly 1, 2 or 3 edges from the $K_4$ incident to 12 o'clock, and exactly 1,
2, or 3 edges from the $K_4$ incident to 3 o'clock.  In all cases but one,
there are a pair of disjoint 1-homologous cycles.  These disjoint 1-homologous
cycles would have been present in the original embedding of $K_6$.  For example,
if only $(v_1,v_6)$ is incident to 12 o'clock, and only $(v_2,v_5)$ is incident
to 3 o'clock, then $(v_1,v_6,v_3)$ and $(v_2,v_5,v_4)$ form disjoint
1-homologous cycles.

The only case that does not lead to disjoint 1-homologous cycles is the case when exactly 1 edge from $K_4$ is incident to 12 o'clock (6 o'clock) and exactly 1 edge from $K_4$ is incident to 3 o'clock (9 o'clock), and these two edges are incident to the same vertex in the $K_4$.  By symmetry, we may assume the edge $(v_1,v_6)$ is incident to 12 o'clock, and the edge $(v_1,v_5)$ is incident to 3 o'clock. Then, all 1-homologous cycles pass through $v_1$, so by Lemma \ref{lemma:P-v}, this embedding is linked.

\end{proof}

An easy consequence of Lemma \ref{k4lem} is as follows:

\begin{cor}\label{cor:2k3111}
	The graph on $9$ vertices obtained by pasting together two copies of 
	$K_{3,1,1,1}$ along the three vertices that are mutually non-adjacent
	is intrinsically linked in ${\mathbb R}P^3$.
\end{cor}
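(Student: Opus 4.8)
The plan is to show that $G$ admits no linkless embedding by producing, inside an arbitrary embedding, two vertex-disjoint $1$-homologous cycles. As noted in the introduction, a $1$-homologous cycle cannot be contained in an embedded ball, so two disjoint $1$-homologous cycles can never be unlinked in the sense of Definition~\ref{defn:unlinked}; they automatically form a non-trivial link. I would label the shared mutually non-adjacent triple $\{x_1,x_2,x_3\}$, the triangle of the first copy $\{y_1,y_2,y_3\}$, and the triangle of the second copy $\{z_1,z_2,z_3\}$, so that each $x_i$ is adjacent to every $y_j$ and every $z_j$, there are no edges among the $x_i$, and there are no edges between the $y$'s and the $z$'s. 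The obstacle to applying Lemma~\ref{k4lem} directly is that $G$ contains no $K_6$ subgraph at all: its largest cliques are the $K_4$'s on $\{x_i,y_1,y_2,y_3\}$ and on $\{x_i,z_1,z_2,z_3\}$. The key idea is that $G$ nonetheless has $K_6$ \emph{minors} that leave one such $K_4$ intact as an honest subgraph, and Lemma~\ref{k4lem} can be applied to the resulting linkless $K_6$.

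Concretely, I would first take the branch sets $\{y_1\},\{y_2\},\{y_3\},\{x_1,z_1\},\{x_2,z_2\},\{x_3\}$. The three $y$'s form a triangle, each $y_j$ is joined to all of $x_1,x_2,x_3$, and the merged sets $\{x_1,z_1\}$ and $\{x_2,z_2\}$ supply the adjacencies missing among the $x_i$ (for instance $\{x_1,z_1\}$ meets $\{x_2,z_2\}$ through $x_1z_2$ and meets $x_3$ through $z_1x_3$); one checks that all six sets are connected and pairwise adjacent. Contracting the edges $x_1z_1$ and $x_2z_2$ in the given embedding of $G$ therefore yields a linkless embedding of $K_6$, because contraction preserves both linklessness and the homology classes of cycles disjoint from the contracted edges. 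Inside this $K_6$ the $K_4$ on $\{y_1,y_2,y_3,x_3\}$ is untouched: its four branch sets are singletons and its six edges are genuine edges of $G$. By Lemma~\ref{k4lem} this $K_4$ carries a $1$-homologous cycle $C_1$, which is thus an honest $1$-homologous cycle of the original embedding, supported on $\{y_1,y_2,y_3,x_3\}$.

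By the symmetry interchanging the two copies, I would then take the branch sets $\{z_1\},\{z_2\},\{z_3\},\{x_2,y_2\},\{x_3,y_3\},\{x_1\}$, chosen precisely so that the surviving clean $K_4$ is the one on $\{z_1,z_2,z_3,x_1\}$. The same routine verification applies, now using edges such as $x_2y_3$ and $y_2x_1$ to realize the adjacencies among the $x_i$; contracting $x_2y_2$ and $x_3y_3$ produces a second linkless $K_6$, and Lemma~\ref{k4lem} gives a $1$-homologous cycle $C_2$ supported on $\{z_1,z_2,z_3,x_1\}$. Since the vertex sets $\{y_1,y_2,y_3,x_3\}$ and $\{z_1,z_2,z_3,x_1\}$ are disjoint, $C_1$ and $C_2$ are disjoint $1$-homologous cycles and hence a non-trivial link, contradicting the assumed linklessness. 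Therefore $G$ is intrinsically linked in $\mathbb{R}P^3$.

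The step I expect to require the most care is ensuring that invoking Lemma~\ref{k4lem} on a $K_6$ \emph{minor} does not corrupt the homology bookkeeping: the cycle the lemma hands back must be a genuine $1$-homologous cycle of the original embedding of $G$, not merely of the contracted picture. This is precisely why I arrange each contraction to occur entirely away from the $K_4$ whose cycle I extract, so that that $K_4$ sits in $G$ as a true subgraph with only original edges and its cycles are literally unchanged by the contraction. Everything else is routine checking of branch-set adjacencies.
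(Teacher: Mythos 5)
Your proof is correct and follows the route the paper intends: the paper states Corollary \ref{cor:2k3111} without an explicit proof, calling it ``an easy consequence'' of Lemma \ref{k4lem}, and your argument---contracting onto two $K_6$ minors, each chosen so that the contractions avoid a genuine $K_4$ subgraph in one copy, then extracting two disjoint 1-homologous cycles to contradict linklessness---is precisely that consequence spelled out. This is also the same technique (applying Lemma \ref{k4lem} through a contraction onto $K_6$ while keeping the relevant $K_4$ intact) that the paper itself uses in its proofs for $K_7$ minus two edges, so your proposal agrees with the paper's approach in both tool and execution.
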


\begin{thm}
 The graph obtained from $K_7$ by removing two edges incident to a common
 vertex is minor-minimal intrinsically linked in ${\mathbb R}P^3$.
\end{thm}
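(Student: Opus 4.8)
The plan is to take an arbitrary embedding $f\colon G\to\mathbb{R}P^3$, where $G=K_7\setminus\{(v_5,v_7),(v_6,v_7)\}$, so that $v_7$ is adjacent exactly to $v_1,v_2,v_3,v_4$ and the six vertices $\{v_1,\dots,v_6\}$ induce a copy of $K_6$, and to exhibit a non-trivial link. I would use two facts throughout: two disjoint $1$-homologous cycles always form a non-trivial link (a $1$-homologous cycle cannot be contained in a ball, so it cannot be separated off by Definition \ref{defn:unlinked}), and Lemma \ref{k4lem}, which guarantees that in a linkless embedding of $K_6$ every $K_4$ subgraph carries a $1$-homologous cycle. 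First, if the restriction of $f$ to the $K_6$ on $\{v_1,\dots,v_6\}$ is already linked, we are done, so I would assume it is linkless; then Lemma \ref{k4lem} applies to every $K_4$ inside it, and in particular to the $K_4$ on $\{v_1,v_2,v_3,v_4\}$.

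The heart of the argument is to locate a $1$-homologous triangle through $v_7$. I would record the $\mathbb{R}P^3$-homology class of a cycle as a $\mathbb{Z}/2$-functional $\phi(C)=\sum_{e\in C}w(e)$ for a fixed edge-weighting $w\colon E(G)\to\mathbb{Z}/2$, and argue by contradiction: suppose all six triangles $v_7v_av_b$ with $a,b\in\{1,2,3,4\}$ are $0$-homologous. Then $w(v_av_b)=w(v_7v_a)+w(v_7v_b)$ for every pair, so $w$ restricted to the $K_4$ on $\{v_1,v_2,v_3,v_4\}$ is a coboundary; consequently every triangle $v_av_bv_c$ inside that $K_4$ has $\phi=0$, and the whole $K_4$ is all-$0$-homologous, contradicting Lemma \ref{k4lem}. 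Hence some triangle $v_7v_av_b$ is $1$-homologous. Writing $\{c,d\}=\{1,2,3,4\}\setminus\{a,b\}$, the subgraph induced by $\{v_c,v_d,v_5,v_6\}$ is a $K_4$ inside the linkless $K_6$, so by Lemma \ref{k4lem} it contains a $1$-homologous cycle; this cycle is vertex-disjoint from $v_7v_av_b$, yielding two disjoint $1$-homologous cycles and hence a non-trivial link. This settles intrinsic linking.

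The remaining task is minor-minimality, for which it suffices to show every proper minor of $G$ embeds in $\mathbb{R}P^2$ (hence has a linkless $\mathbb{R}P^3$-embedding and is not intrinsically linked). Deleting $v_7$ gives $K_6$; deleting any other vertex gives $K_6$ with one or two edges removed; and every edge contraction collapses $G$ to $K_6$ or to $K_6$ with edges removed. All of these are projective-planar, so the only delicate minors are the single-edge deletions, which produce $K_7$ with three edges removed, including $K_7$ minus a triangle (the case of deleting $(v_5,v_6)$, i.e. $K_{1,1,1,1,3}$); I would confirm that each of these is projective-planar using the classification of projective-plane obstructions \cite{glover79,archdeacon83}. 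I expect this verification to be the main technical obstacle, since the linking argument itself is short once Lemma \ref{k4lem} is available, whereas certifying each edge-deleted minor requires producing or citing an explicit $\mathbb{R}P^2$-embedding. It is precisely the hypothesis that the two removed edges share the vertex $v_7$ that makes the six remaining vertices induce a full $K_6$ and thus drives the entire argument; for a non-adjacent pair no $K_6$ subgraph survives, so that case must be handled separately.
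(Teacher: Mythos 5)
Your proposal is correct and takes essentially the same route as the paper's own proof: both assume the $K_6$ on $\{v_1,\dots,v_6\}$ is linklessly embedded, invoke Lemma \ref{k4lem} to get a $1$-homologous cycle in the $K_4$ on $\{v_1,v_2,v_3,v_4\}$, use a mod $2$ homology argument to transfer this to a $1$-homologous triangle through $v_7$, apply Lemma \ref{k4lem} again to the disjoint $K_4$ on the remaining four vertices to obtain two disjoint $1$-homologous cycles (hence a non-trivial link), and settle minor-minimality by showing every proper minor is projective planar, citing \cite{glover79,archdeacon83}. Your explicit $\mathbb{Z}/2$ edge-weighting merely spells out the ``homology argument'' the paper leaves implicit, and your enumeration of the one-step minors (with the edge-deletion cases such as $K_{1,1,1,1,3}$ as the only delicate ones) is exactly the verification the paper delegates to those references.
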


\begin{proof}
Let $G$ be the graph obtained from $K_7$ by removing two edges incident to a
common vertex.
Let $v_1,v_2,.., v_7$ denote the vertices of $G$, with $v_7$ connected only to $v_1,v_2,v_3$ and $v_4$.  Embed $G$.  By the previous result, if the embedding is linkless, the $K_4$ induced on $\{v_1,v_2,v_3,v_4\}$ must contain a 1-homologous 3-cycle.  By a homology argument, then there must be a 1-homologous 3-cycle through the vertex $v_7$.  Without loss of generality, we may assume the 3-cycle is $(v_1,v_2,v_7)$.  If the embedding is linkless, then by the previous result, the $K_4$ induced by $\{v_3,v_4,v_5,v_6\}$ must contain a 1-homologous cycle, but this forces two disjoint 1-homologous cycles.  Thus, the embedding cannot be linkless.

The graph $G$ is minor-minimal since any proper minor of $G$ embeds in the projective plane \cite{glover79}\cite{archdeacon83}.
\end{proof}

\begin{thm}
The graph obtained from $K_7$ by removing two non-adjacent edges is
minor-minimal intrinsically linked in ${\mathbb R}P^3$.
\end{thm}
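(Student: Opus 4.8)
The plan is to assume a linkless embedding of $G=K_7\setminus\{(v_1,v_2),(v_3,v_4)\}$ and derive a contradiction by producing a nontrivial link, exactly as in the adjacent case but compensating for the fact that $G$ has no $K_6$ subgraph. First I would record the structural features that drive the argument. Writing the two deleted edges as the disjoint pairs $\{v_1,v_2\}$ and $\{v_3,v_4\}$, the vertices $v_5,v_6,v_7$ are adjacent to everything, so $G$ is the join of the triangle $v_5v_6v_7$ with the $4$-cycle $v_1v_3v_2v_4$. Although no six vertices induce a $K_6$, contracting any edge $v_iv_j$ with $i\in\{1,2\}$, $j\in\{3,4\}$ produces a genuine $K_6$ minor: the contracted vertex becomes adjacent to everything precisely because the two deleted edges are vertex-disjoint, so no vertex fails to be adjacent to both endpoints. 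Since a minor of a linklessly embedded graph inherits a linkless embedding and contraction preserves the homology class of every cycle, Lemma \ref{lemma:P-v} and Lemma \ref{k4lem} apply to each of these four $K_6$ minors. I would also use that $G$ contains a spanning $K_{3,3,1}$ (for instance with parts $\{v_1,v_2,v_5\}$, $\{v_3,v_4,v_6\}$, $\{v_7\}$), so Lemma \ref{lemma:P-v} is available on a Petersen-family subgraph through any prescribed vertex.

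The engine is the following consequence of Lemma \ref{k4lem}: every $K_4$ subgraph of $G$ contains a $1$-homologous triangle. Indeed, any genuine $K_4$ avoids both deleted pairs, and one can always choose a contracted edge $v_iv_j$ disjoint from it, so it sits inside one of the $K_6$ minors above as an uncontracted $K_4$, where Lemma \ref{k4lem} forbids it from being entirely $0$-homologous. Crucially, Lemma \ref{k4lem} and Lemma \ref{lemma:P-v} also constrain the $K_4$'s and $K_5$'s of these minors that run \emph{through} the contracted vertex; translated back to $G$, these become homology constraints on the $4$-cycles that use a contracted edge, such as $v_5v_2v_4v_6$. With these constraints in hand I would run a case analysis organized by how the triangles meeting $\{v_5,v_6,v_7\}$ and the $4$-cycle on $\{v_1,v_2,v_3,v_4\}$ distribute their homology classes. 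In the generic cases one locates a $1$-homologous triangle that meets both deleted pairs; since no triangle can contain a deleted edge, such a triangle uses exactly one vertex from each pair, so its three vertices leave a complementary $K_4$, and the engine produces a second, vertex-disjoint $1$-homologous cycle. Two disjoint $1$-homologous cycles always link, giving the contradiction. In the residual cases one instead shows that every $1$-homologous cycle passes through a single vertex $w$; then every cycle of the spanning $K_{3,3,1}$ missing $w$ is $0$-homologous, and Lemma \ref{lemma:P-v} again yields a link.

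The hard part is exactly this case analysis, and in particular eliminating the ``affine'' homology patterns that the pure homology bookkeeping does not immediately defeat. For example, the pattern in which a cycle is $1$-homologous precisely when it uses an odd number of the three edges of the triangle $v_5v_6v_7$ satisfies all of the genuine-$K_4$ constraints, yet it admits neither two disjoint $1$-homologous cycles (each such cycle would have to use at least two of $v_5,v_6,v_7$) nor a common vertex; it is ruled out only by applying Lemma \ref{k4lem} to a $K_4$ of a $K_6$ minor that passes through the contracted vertex, which in this pattern is forced to be entirely $0$-homologous (equivalently, by a Conway--Gordon mod-$2$ linking argument detecting a $0$-homologous link). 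Handling these affine patterns uniformly is where the bulk of the work lies, and it is the reason the non-adjacent case is genuinely harder than the short argument for two adjacent edges, where a clean $K_6$ subgraph was already present. Finally, minor-minimality is immediate: every proper minor of $G$ is projective-planar \cite{glover79,archdeacon83} and hence has a linkless embedding in $\mathbb{R}P^3$.
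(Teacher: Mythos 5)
Your toolkit coincides with the paper's --- the four $K_6$ minors obtained by contracting an edge joining the two deleted pairs, Lemma~\ref{k4lem} applied inside those minors, and the fact that two disjoint 1-homologous cycles always link --- but the proposal stops where the proof has to start. The case analysis that you yourself flag as ``where the bulk of the work lies'' is never carried out, and your own affine example shows this is not routine bookkeeping: that pattern satisfies all the genuine-$K_4$ constraints yet admits neither two disjoint 1-homologous cycles nor a vertex common to all 1-homologous cycles, so the two endgames you announce (disjoint cycles, or a common vertex plus Lemma~\ref{lemma:P-v}) are provably not exhaustive, and you neither enumerate the remaining patterns nor prove that your third mechanism (constraints through a contracted vertex) finishes all of them. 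The paper closes the argument in two short steps, with no enumeration of homology patterns at all. First, every triangle consisting of one universal vertex and one vertex from each deleted pair must be 0-homologous, and so must the 4-cycle on the four non-universal vertices (if that 4-cycle were 1-homologous, its mod-2 decomposition into the four triangles through a universal vertex would force one such triangle to be 1-homologous); for if such a triangle were 1-homologous, the complementary four vertices induce a $K_4$, which sits inside the $K_6$ minor obtained by contracting the remaining cross edge and hence contains a 1-homologous cycle by Lemma~\ref{k4lem}, yielding two disjoint 1-homologous cycles and a link. Second, these facts force every cycle of the subgraph induced on one universal vertex together with the four non-universal vertices to be 0-homologous; contracting one edge of the non-universal 4-cycle turns this subgraph into a $K_4$ of a $K_6$ minor with all cycles 0-homologous, contradicting Lemma~\ref{k4lem} outright. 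That single application through the contracted vertex --- to one specific subgraph, not to a list of patterns --- is exactly what kills your affine pattern and every other residual case at once; without it (or something equivalent), your proposal is a plan, not a proof.

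Your minor-minimality sentence is also incorrect as stated. It is not true that every proper minor of $G$ is projective planar: deleting a third edge disjoint from the two missing ones produces $K_{1,2,2,2}$, which does not embed in the projective plane. This is why the paper's proof has two prongs: every proper minor of $K$ is either projective planar \cite{glover79,archdeacon83} or becomes planar after the removal of a vertex (as $K_{1,2,2,2}$ does upon deleting its degree-6 vertex); graphs of the latter kind are not intrinsically linked in $\mathbb{R}^3$ and so embed linklessly in a 3-ball inside $\mathbb{R}P^3$. As written, your minimality argument simply does not cover these minors, so even the ``easy'' half of the theorem is left with a hole.
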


\begin{proof}
Let $K$ be the graph obtained from $K_7$ by removing two non-adjacent edges.

Label the vertices of $K_7$ as $\{v_1, v_2, ..., v_7\}$, and suppose edges
$(v_4,v_5)$ and $(v_6, v_7)$ are removed to result in the graph $K$.  Embed
$K$, and suppose the embedding is linkless.  We claim  that the $4$-cycle
$(v_4, v_7,v_5,v_6)$ cannot be 1-homologous.  If it were, then a cycle of the
form $(v_1,v_i,v_j)$ is 1-homologous, for $i, j \in \{4,5,6,7\}$, with
$i \neq j$.  Without loss of generality, suppose $(v_1,v_5,v_7)$ is
1-homologous; then the subgraph induced by $\{v_2,v_3,v_4,v_6\}$ forms $K_4$,
and since $K$ contracts onto $K_6$, by Lemma \ref {k4lem}, there must be a
disjoint 1-homologous cycle, which is a contradiction.  Similarly, the
following $3$-cycles must also be 0-homologous:  $(v_i, v_4,v_7)$,
$(v_j, v_5,v_7)$, $(v_k, v_5,v_6)$, and $(v_m, v_4,v_6)$, where
$i,j,k,m \in \{1,2,3\}$.  It follows that for the subgraph induced by the
vertices $\{v_1, v_4, v_7, v_5, v_6\}$, every cycle is 0-homologous.  Since this
subgraph contracts onto $K_4$, and since $K$ contracts onto $K_6$ (by
contracting the same edge), it follows from Lemma \ref{k4lem} that there must
be non-splittable links in the embedding.  It follows that $K$ is intrinsically
linked in ${\mathbb R}P^3$.

The graph $K$ is minor-minimal for intrinsic linking since any proper minor of
$K$ is projective planar, as shown by Glover, et al \cite{glover79} and
Archdeacon \cite{archdeacon83}, or does not contain any intrinsically
$\mathbb{R}^3$-linked graphs as a minor, so there exists a linkless embedding
of every proper minor into a 3-ball.

The graph $K$ is minor-minimal for intrinisic linking in ${\mathbb R}P^3$
since any proper minor of $K$ is either projective planar
\cite{archdeacon83,glover79} or becomes ($\mathbb{R}^2$) planar after the
removal of a vertex (and hence is not intrinsically linked in space). In
either case no minor is intrinisically linked in ${\mathbb R}P^3$.

\end{proof}

\section{Other Intrinsically Linked Graphs}

It is not too hard to see that $\triangle-Y$ exchanges preserve intrinsic
linking as in $\mathbb{R}^3$ \cite{motwani88}, so any graph generated
from a known intrinsically $\mathbb{R}P^3$-linked graph by a sequence
of $\triangle-Y$ exchanges is also intrinsically linked. Corollary
\ref{cor:2k3111} provides a graph with several $\triangle$ subgraphs.

Notice that two copies of $K_{3,1,1,1}$ glued along the three mutually
non-adjacent vertices is the same as gluing two copies of $K_6$ along
three vertices $v_1,v_2,v_3$ and then removing the triangle composed of
the three edges
between $v_1,v_2,$ and $v_3$. For notational convenience, each
copy of $K_6$ with the triangle removed will be referred to as
$K_6 \therefore$, and $K_6 \therefore K_6$ denotes the gluing of
two copies of $K_6 \therefore$ along the three vertices that are
mutually non-adjacent. By Corollary \ref{cor:2k3111}, $K_6 \therefore K_6$ is
intrinsically linked in $\mathbb{R}P^3$.

In general, we define $G \therefore$ to be a graph with three
marked vertices which are mutually non-adjacent. If
$G_1 \therefore$ and $G_2 \therefore$ are two such graphs,
then $G_1 \therefore G_2$ is a graph obtained by gluing the
two graphs along the three marked vertices of $G_1 \therefore$
and $G_2 \therefore$. The resulting graph
may not be unique if permutation of the marked vertices does not
yield a graph isomorphism of each $G_i \therefore, i=1,2$. In such
cases, we will differentiate
between the (up to) three distinct graphs by a subscript, as in
$G_1 \therefore_1 G_2$, $G_1 \therefore_2 G_2$, and
$G_1 \therefore_3 G_2$.

\begin{prop}
	There are 18 intrinsically linked graphs in $\mathbb{R}P^3$ with
	3-connectivity that can be obtained from $K_6 \therefore K_6$ by
	$\triangle - Y$ exchanges. 
\end{prop}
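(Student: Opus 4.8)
The plan is to separate the two ingredients of the statement. Since every graph obtained from an intrinsically linked graph by a $\triangle-Y$ exchange is again intrinsically linked in $\mathbb{R}P^3$, and since $K_6 \therefore K_6$ is intrinsically linked by Corollary \ref{cor:2k3111}, every graph reachable from $K_6 \therefore K_6$ by a sequence of $\triangle-Y$ exchanges is automatically intrinsically linked. Thus the homological content is settled at the outset, and what remains is a purely graph-theoretic enumeration: count, up to isomorphism, the $3$-connected graphs reachable from $K_6 \therefore K_6$ by $\triangle-Y$ exchanges.

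To organize the enumeration I would first record the structure of $K_6 \therefore K_6$: its nine vertices split into the three gluing vertices $v_1,v_2,v_3$ (each of degree $6$, mutually non-adjacent) and the two triangles $\{a_1,a_2,a_3\}$ and $\{b_1,b_2,b_3\}$ (each vertex of degree $5$), with no edge between an $a$ and a $b$. Its automorphism group is $S_3 \times (S_3 \wr \mathbb{Z}_2)$, generated by permuting the $v_i$, permuting each of the $a$- and $b$-triangles internally, and swapping the two copies. I would then classify the triangles of the graph into orbits under this group. A check over the cases shows there are exactly two orbits: the \emph{pure} triangles $\{a_1,a_2,a_3\}$ and $\{b_1,b_2,b_3\}$, and the \emph{mixed} triangles $\{a_i,a_j,v_k\}$ together with their $b$-analogues. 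A single $\triangle-Y$ exchange is therefore determined, up to isomorphism, by which of these two orbits the chosen triangle lies in.

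From here I would run a breadth-first search through the graphs reachable by $\triangle-Y$ exchanges. At each stage I list the triangles of the current graph, group them into orbits under its automorphism group, apply a $\triangle-Y$ exchange to one representative of each orbit, discard any resulting graph that is not $3$-connected (the newly added degree-$3$ vertices, together with the loss of edges incident to the $v_i$ when a mixed triangle is exchanged, are what can drop the connectivity below $3$), and record the survivors up to isomorphism, merging duplicates. The search is guaranteed to terminate because a $\triangle-Y$ exchange strictly decreases the number of triangles: it destroys the chosen triangle (and any others sharing one of its three edges) while the newly added claw contributes none, since the new vertex's three neighbors are pairwise non-adjacent after the exchange. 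Hence only finitely many graphs arise, the list can be completed, and carrying it out yields exactly $18$ distinct $3$-connected graphs.

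The main obstacle is the combinatorial bookkeeping rather than any single conceptual step. One must track the shrinking automorphism group at each stage so as to enumerate triangle orbits without redundancy, test graph isomorphism in order to merge the many branches of the search that lead to the same graph, and verify $3$-connectivity case by case. Because the branching is substantial, I would exploit the copy-swap symmetry to halve much of the hand analysis and confirm the final count of $18$ with a computer-assisted check.
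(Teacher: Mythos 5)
Your preliminary observations are all correct and your procedure is sound in principle: intrinsic linking is preserved under $\triangle-Y$ exchanges, the automorphism group of $K_6 \therefore K_6$ is indeed $S_3 \times (S_3 \wr \mathbb{Z}_2)$ (its complement is $K_3 \sqcup K_{3,3}$), there are exactly two triangle orbits, and the number of triangles strictly decreases under each exchange, so your breadth-first search terminates. The genuine gap is that the enumeration itself --- which is the entire content of the proposition --- is never carried out. You assert that ``carrying it out yields exactly $18$ distinct $3$-connected graphs'' and defer to ``a computer-assisted check,'' but nothing in the proposal derives the number $18$ or explains why it is $18$ rather than, say, $15$. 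The mathematical content is precisely in the bookkeeping you postpone: three of the unordered pairs of one-sided graphs admit two non-isomorphic gluings along the marked triple, and that is what pushes the count from $15$ to $18$. A proof must exhibit this.

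The paper makes the count feasible by hand through a structural factorization your global search does not exploit. Since the marked vertices $d,e,f$ are mutually non-adjacent, every triangle of $K_6 \therefore K_6$ lies entirely within one copy of $K_6\therefore$ (together with at most one marked vertex), and this persists under exchanges, since an exchange only deletes edges among old vertices and attaches a new vertex whose neighborhood lies in one copy. Hence exchanges act independently on the two sides, and every reachable graph has the form $G_1 \therefore G_2$ where each $G_i\therefore$ is reachable from a single $K_6\therefore$. The paper enumerates these one-sided graphs --- there are five: $K_6\therefore$, $P_{7A}\therefore$, $P_{7B}\therefore$, $P_{8B}\therefore$, $P_{9B}\therefore$ --- giving $\binom{5}{2}+5=15$ unordered pairs, and then observes that exactly the pairs $P_{7B}\therefore P_{7B}$, $P_{7B}\therefore P_{8B}$, and $P_{8B}\therefore P_{8B}$ admit two inequivalent gluings (according to whether the $Y$ subgraphs share a marked vertex); these are precisely the one-sided graphs whose marked-vertex symmetry is $\mathbb{Z}_2$ rather than $S_3$, so the total is $12 + 3\cdot 2 = 18$. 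Your BFS with isomorphism testing would rediscover all of this, but to turn the proposal into a proof you must either actually run and report that computation or supply the factorization argument above.
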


\begin{proof}
	Figure \ref{fig:triy} shows $K_6 \therefore$ with the marked vertices
	($d,e,$ and $f$)
	shown as open circles, and the edges which were removed from
	$K_6$ shown as dotted lines. All other edges are not shown. In
	the subsequent figures, only edges added by $\triangle - Y$
	exchange are shown.

	\begin{figure}
		\begin{center}
			\includegraphics[scale=0.78]{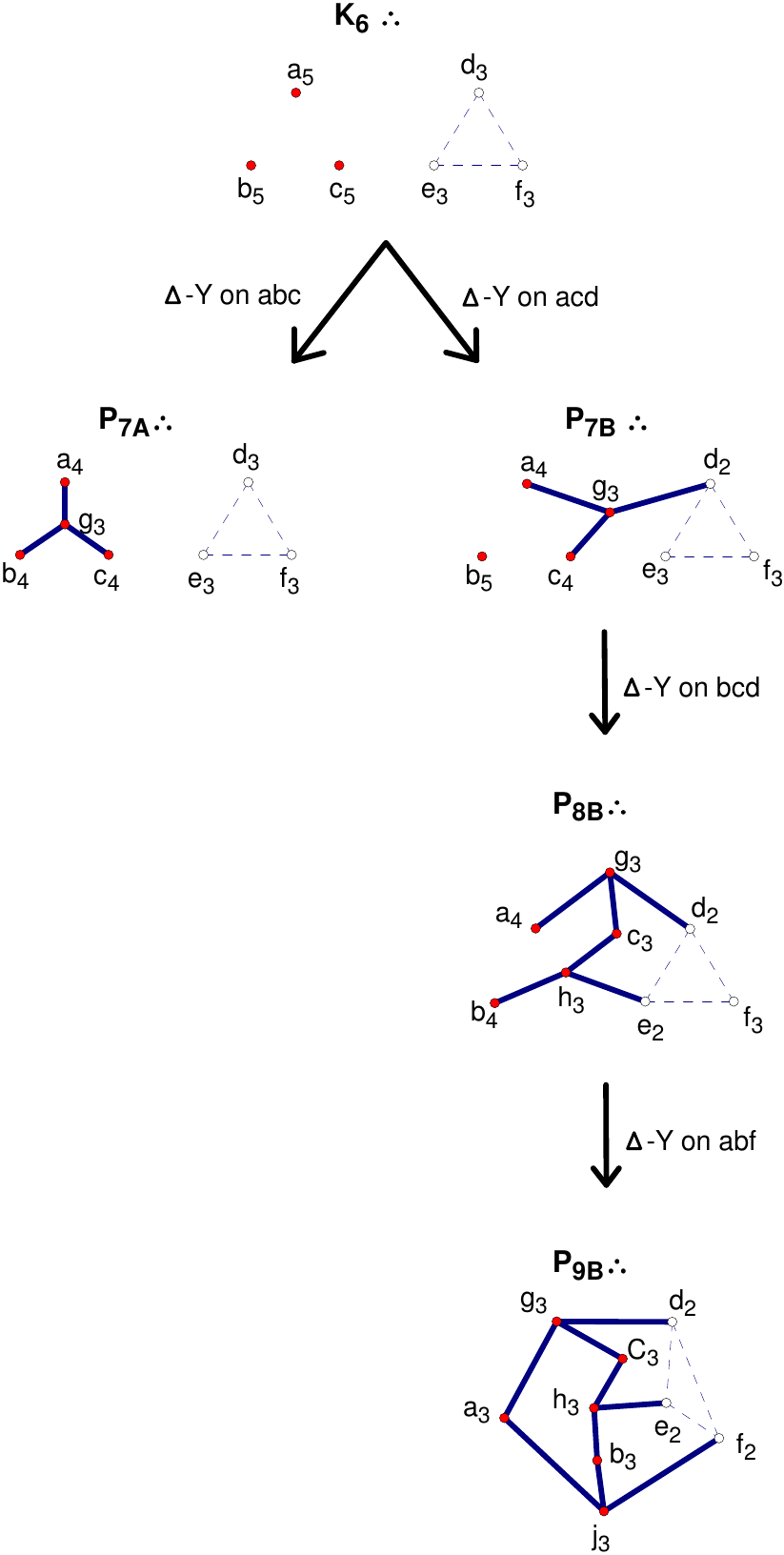}
			\caption{Graphs obtained from performing $\triangle - Y$
				exchanges on $K_6 \therefore$ and each subsequent graph,
				with subscripts denoting degree of the vertex, the open
				circles representing the three marked vertices, and dashed
				edges representing edges removed from the original $K_6$.}
			\label{fig:triy}
		\end{center}
	\end{figure}

	The graphs obtained by repeated $\triangle - Y$ exchanges on
	$K_6 \therefore$ are shown in Figure \ref{fig:triy}. Consequently,
	there are 18 intrinsically linked graphs in $\mathbb{R}P^3$
	obtained from $K_6 \therefore K_6$. The graphs $P_{7B} \therefore
	P_{7B}$, $P_{7B} \therefore P_{8B}$, and $P_{8B} \therefore
	P_{8B}$ each have two different configurations. The configuration
	where the $Y$ subgraphs of the two copies of $P_{7B} \therefore$
	are glued together along a shared vertex
	is $P_{7B} \therefore_1 P_{7B}$, and the configuration where they
	are not is $P_{7B} \therefore_2 P_{7B}$. Similarly, if the $Y$ in
	$P_{7B} \therefore$ shares a vertex with a $Y$ in $P_{8B} \therefore$,
	we have $P_{7B} \therefore_1 P_{8B}$, and otherwise, we have
	$P_{7B} \therefore_2 P_{8B}$. If the $Y$ subgraphs are paired up
	in $P_{8B} \therefore P_{8B}$, we have $P_{8B} \therefore_1 P_{8B}$,
	and otherwise, we have $P_{8B} \therefore_2 P_{8B}$.
\end{proof}

\begin{remark}
	The graphs $P_{7A} \therefore K_6$, $P_{7A} \therefore P_{7A}$,
	$P_{7A} \therefore P_{7B}$, $P_{7A} \therefore P_{8B}$, and
	$P_{7A} \therefore P_{9B}$ are intrinsically linked in
	$\mathbb{R}P^3$, but all contain
	$K_{4,4}$ with an edge removed as a minor. Hence, they are not
	minor-minimal intrinsically linked in $\mathbb{R}P^3$.
\end{remark}

To show that the remaining 13 intrinsically linked graphs in
$\mathbb{R}P^3$ are minor-minimal, we will use the following result
from \cite{brouwer07,ozawa07}.

\begin{thm}
	Let $P$ be a property preserved under $\triangle - Y$ exchange.
	Let $G$ be a graph that contains at least one degree three vertex
	and is minor-minimal with respect to $P$. Let $G'$ be a graph
	obtained from $G$ by a $Y - \triangle$ exchange. If $G'$ has
	property $P$, then $G'$ is also minor-minimal with respect to $P$. 
\end{thm}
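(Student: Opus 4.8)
The plan is to argue by contradiction. Since we are given that $G'$ has property $P$, to prove that $G'$ is minor-minimal it suffices to show that no \emph{proper} minor of $G'$ has $P$; so suppose instead that some $H' \lneq G'$ has property $P$, and aim to produce from it a proper minor of $G$ with property $P$, contradicting the minimality of $G$. Fix the relevant structure: let $v$ be the degree-three vertex of $G$ at which the $Y-\triangle$ exchange is performed, with neighbors $a,b,c$, so that $G'$ is obtained from $G$ by deleting $v$ together with its three edges and inserting the triangle $T$ on $\{a,b,c\}$ with edges $ab,bc,ca$. The entire argument splits according to whether $T$ survives intact in the minor $H'$.

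First I would record two elementary minor identifications that tie the triangle $T$ in $G'$ back to the $Y$ in $G$. Because $v$ has degree \emph{exactly} three, contracting a single leg of the $Y$ in $G$ yields the triangle with one edge missing, namely $G/\{va\} \cong G' \setminus \{bc\}$ and its two analogues, while contracting two legs yields the triangle with one edge contracted, namely $G/\{va,vb\} \cong G'/\{ab\}$ and its analogues (parallel edges produced by the contraction being simplified). Consequently, for every edge $e$ of $T$, both $G'\setminus e$ and $G'/e$ are proper minors of $G$. The first case is that $T$ is disturbed: in passing from $G'$ to $H'$ some edge of $T$ is deleted or contracted, or some vertex of $\{a,b,c\}$ is deleted (which itself deletes an incident edge of $T$). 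In each situation $H'$ is a minor of one of the graphs $G'\setminus e$ or $G'/e$, hence a proper minor of $G$; as $H'$ has property $P$, this already contradicts the minor-minimality of $G$.

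The remaining case is that $T$ survives intact in $H'$: none of its edges is deleted or contracted and none of $a,b,c$ is deleted, so every minor operation used to form $H'$ lies inside $G'\setminus E(T)$, which is exactly $G\setminus\{v\}$. The idea is to perform the \emph{same} operations on $G$. Since they never touch $v$ or the legs $va,vb,vc$, the $Y$ survives, and I obtain a minor $H$ of $G$ carrying a $Y$ on $\{a,b,c\}$ wherever $H'$ carries the triangle $T$; equivalently, $H$ is precisely the result of a $\triangle-Y$ exchange applied to $H'$, with the surviving vertex $v$ playing the role of the new $Y$-center. Because at least one operation was performed (as $H'\lneq G'$ is proper) and none can be undone, $H$ is a proper minor of $G$; and because property $P$ is preserved under $\triangle-Y$ exchange and $H'$ has $P$, so does $H$. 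This again contradicts the minimality of $G$, and the two cases together finish the proof.

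I expect the main obstacle to be the bookkeeping in the last case: one must verify that an arbitrary sequence of deletions and contractions confined to $G'\setminus E(T)$ lifts verbatim to $G$ and genuinely realizes the $\triangle-Y$ exchange of $H'$ as a minor of $G$. The delicate points are contractions of non-triangle edges incident to $a$, $b$, or $c$ (which merge a triangle vertex with an outside vertex $x$ but leave the triangle, and correspondingly the $Y$, intact; here the degree-three hypothesis is what guarantees $x$ is not adjacent to $v$, so the lift is clean) and the simplification of parallel edges created by contraction, which must be tracked consistently on both sides so that the identification $H = (\triangle-Y)(H')$ is exact. Checking the explicit isomorphisms $G/\{va\}\cong G'\setminus\{bc\}$ and $G/\{va,vb\}\cong G'/\{ab\}$ is routine, but it is precisely where the assumption that $v$ has degree three is used.
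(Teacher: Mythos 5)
A preliminary remark: the paper offers no proof of this theorem to compare against --- it is imported as a black box from \cite{brouwer07,ozawa07} and used only to reduce minor-minimality of the thirteen exchange graphs to that of $P_{9B} \therefore P_{9B}$. Judged on its own, your proposal has the right overall architecture, and your Case 1 is sound: the identifications $G/\{va\} \cong G' \setminus \{bc\}$ and $G/\{va,vb\} \cong G'/\{ab\}$ are correct, they are exactly where the hypothesis $\deg(v)=3$ enters, and they turn any proper minor $H'$ that disturbs the triangle into a proper minor of $G$ with property $P$, contradicting the minimality of $G$.

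The gap is in Case 2, and it sits precisely on the claim you yourself flagged as delicate: ``$H$ is precisely the result of a $\triangle-Y$ exchange applied to $H'$.'' This is false in general, and no bookkeeping convention makes it true. Take $V(G)=\{v,a,b,c,x\}$ with $E(G)=\{(v,a),(v,b),(v,c),(a,x),(x,b)\}$, so that $E(G')=\{(a,b),(b,c),(c,a),(a,x),(x,b)\}$, and form $H'$ by contracting the non-triangle edge $(a,x)$ --- a legitimate Case 2 operation. In $H'$ the image of $(x,b)$ becomes parallel to the triangle edge $(a,b)$ and is simplified away, so $H'$ is just the triangle on the merged vertex $[ax]$ together with $b,c$, and $(\triangle-Y)(H')$ is the star $K_{1,3}$; but the lift $H=G/\{ax\}$ is that star plus the extra edge $([ax],b)$. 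Since $H \neq (\triangle-Y)(H')$, your inference ``$H'$ has $P$, and $P$ is preserved under $\triangle-Y$ exchange, hence $H$ has $P$'' does not go through: nothing forces $H$ itself to have $P$. Fortunately the repair is cheap, because equality was never needed, only one-directional minor containment: in your Case 2 one always has $(\triangle-Y)(H') \leq H$, since $(\triangle-Y)(H')$ is exactly $H$ with any edges joining images of distinct triangle vertices deleted (and the new $Y$-center identified with $v$). Thus $(\triangle-Y)(H')$ has property $P$ by preservation applied to $H'$, and it is a minor of $H$, hence a proper minor of $G$ --- the desired contradiction. A final bookkeeping point for the same case: a chain of contractions of non-triangle edges can identify two triangle vertices (contract $(a,x)$, then the image of $(x,b)$); such sequences must be routed into your Case 1, where the containment $H' \leq G'/\{ab\}$ still applies, rather than into Case 2, where the surviving ``triangle'' would have a degenerate edge.
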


Thus, we need only show that no proper minor of
$P_{9B} \therefore P_{9B}$ is intrinsically linked in
$\mathbb{R}P^3$.

\begin{thm}
	The 13 graphs obtained from $\triangle-Y$ exchange on
	$K_6 \therefore K_6$ which do not contain $P_{7A}\therefore$
	as a subgraph are minor-minimal intrinsically linked in
	$\mathbb{R}P^3$.
\end{thm}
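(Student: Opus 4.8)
The plan is to leverage the $\triangle-Y$ and $Y-\triangle$ exchange machinery so that all $13$ graphs reduce to a single base case. First I would observe that every one of the $13$ graphs is intrinsically linked in $\mathbb{R}P^3$: each arises from $K_6 \therefore K_6$ by $\triangle-Y$ exchanges, $K_6 \therefore K_6$ is intrinsically linked by Corollary \ref{cor:2k3111}, and intrinsic linking in $\mathbb{R}P^3$ is preserved under $\triangle-Y$ exchange (as noted at the start of this section, via \cite{motwani88}). Ordering the family by $\triangle-Y$ exchange, the graph $P_{9B} \therefore P_{9B}$ is the unique maximal element of the $P_{7A}$-free sub-poset: it sits at the top of the exchange tree of Figure \ref{fig:triy} on both glued copies, and the enumeration lists no alternate gluing configuration for it. Every other graph in the family is therefore reached from $P_{9B} \therefore P_{9B}$ by a sequence of $Y-\triangle$ exchanges passing only through graphs that are intrinsically linked and that carry a degree-three vertex (the $Y$'s introduced by the exchanges), and descending toward $K_6 \therefore K_6$ never reintroduces a $P_{7A} \therefore$ subgraph.

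Granting this, the cited theorem of \cite{brouwer07,ozawa07} applies at each step of the descent: if the current graph is minor-minimal intrinsically linked and has a degree-three vertex, and the graph obtained from it by a $Y-\triangle$ exchange is still intrinsically linked, then that graph is also minor-minimal. Starting from $P_{9B} \therefore P_{9B}$ and descending to $K_6 \therefore K_6$, minor-minimality propagates to all $13$ graphs at once. Hence it suffices to prove the base case: \emph{no proper minor of $P_{9B} \therefore P_{9B}$ is intrinsically linked in $\mathbb{R}P^3$.}

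For the base case I would follow the template used for $K_{4,4}\setminus\{e\}$ and for $K_7$ minus two edges: exhibit for every proper minor a linkless embedding in $\mathbb{R}P^3$. Concretely, I would show that each proper minor is either projective planar, and hence linklessly embeddable in $\mathbb{R}P^3$ \cite{glover79,archdeacon83}, or becomes $\mathbb{R}^2$-planar after deleting a single vertex, and hence (being apex) admits a linkless embedding into a ball in $\mathbb{R}P^3$ in which every cycle is $0$-homologous. The structural reason this should work is that the linking of $P_{9B} \therefore P_{9B}$ is forced only by the \emph{interaction} of the two $P_{9B}$ copies (Lemma \ref{k4lem} and Corollary \ref{cor:2k3111}), while each individual copy, being the projective-planar Petersen-family graph $P_9$, admits a $0$-homologous embedding on its own. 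Deleting or contracting an edge, or deleting a vertex, damages at least one copy enough to destroy its contribution to a pair of disjoint $1$-homologous cycles, after which the damaged copy can be embedded $0$-homologously in a ball and the intact copy embedded projective-planarly, the two meeting only along the marked vertices $d,e,f$. To keep the case count finite I would first record the automorphism group of $P_{9B} \therefore P_{9B}$ --- which contains at least the involution swapping the two copies together with the symmetries of $P_{9B}$ fixing the marked set --- and reduce to one representative in each orbit of deletable edges, deletable vertices, and contractible edges.

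The main obstacle is this base-case analysis, and within it the contractions of edges incident to one of the three shared vertices $d,e,f$. Such a contraction identifies a marked vertex with an interior vertex of one copy, which can merge the two copies along more than three vertices and create adjacencies present in neither $P_9$, so the clean ``one damaged copy, one intact copy'' picture must be re-established by hand and the resulting small graph checked directly for projective planarity or apex-ness. The remaining interior edge and vertex operations are comparatively routine, since each reduces one copy to a proper minor of a Petersen-family graph, which is already known to carry a linkless (indeed $0$-homologous) embedding in $\mathbb{R}^3$, leaving only the finitely many mixed cases to verify explicitly.
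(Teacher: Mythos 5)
Your reduction step agrees with the paper's: all thirteen graphs are intrinsically linked because $\triangle-Y$ exchanges preserve intrinsic linking and $K_6 \therefore K_6$ is linked by Corollary \ref{cor:2k3111}, and the theorem of \cite{brouwer07,ozawa07} propagates minor-minimality down the exchange poset from $P_{9B}\therefore P_{9B}$, so only the base case --- that no proper minor of $P_{9B}\therefore P_{9B}$ is intrinsically linked in $\mathbb{R}P^3$ --- remains to be proved. Up to that point your argument is fine and is essentially the paper's.

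The gap is in your plan for the base case: you propose to show that every proper minor is either projective planar or apex (planar after deleting one vertex), following the $K_7$-minus-two-edges template, but that dichotomy is \emph{false} for the minors of $P_{9B}\therefore P_{9B}$, so the plan cannot be carried out. Each copy of $P_{9B}\therefore$ is a subdivision of $K_{3,3}$ in which the marked vertices $d,e,f$ subdivide a perfect matching; hence the glued graph has $15$ vertices, $24$ edges, girth $5$, and every $5$-cycle lies inside a single copy (a cycle meeting both copies passes through two marked vertices, which are at distance $3$ inside a copy, so it has length at least $6$). Now let $H$ be the graph obtained by deleting an edge of the class of $(c_1,h_1)$ in Figure \ref{fig:P9BP9B}, i.e.\ an edge of copy $1$ joining two unmarked vertices. $H$ still contains copy $2$, an intact $K_{3,3}$-subdivision, so $H$ is nonplanar and any embedding in $\mathbb{R}P^2$ would be $2$-cell; Euler's formula then gives $9$ faces of total boundary length $2E = 46$, which with girth $5$ forces eight pentagonal faces and one hexagonal face. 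But copy $1$ minus the deleted edge contains only three $5$-cycles (each unmarked--unmarked edge lies on three of the six pentagons of a copy), while its $11$ edges require $22$ facial incidences, and three pentagons plus the single allowed hexagon supply at most $15+6=21$. So $H$ is not projective planar. Nor is $H$ apex: deleting an unmarked vertex of copy $1$ leaves copy $2$'s $K_{3,3}$-subdivision; deleting an unmarked vertex of copy $2$ leaves a connected remnant of copy $2$ attached to all of $d,e,f$, while copy $1$ minus the deleted edge (a $9$-cycle with two interleaved chords) has no planar embedding with $d,e,f$ on a common face; and deleting a marked vertex leaves the two copies glued along the remaining two marked vertices, where copy $2$ minus a subdivision vertex has no planar embedding with those two vertices on a common face. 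Yet $H$ does admit a linkless embedding in $\mathbb{R}P^3$: with Definition \ref{defn:unlinked}, linkless embeddings may wrap essentially around $\mathbb{R}P^3$, so linkless embeddability is strictly weaker than ``projective planar or apex,'' and the minors here genuinely need that extra room. The paper's proof is built on exactly this point: it exhibits one embedding of $P_{9B}\therefore P_{9B}$ (Figure \ref{fig:P9BP9B}) whose only nontrivial link is $c_1h_1ea_1g_1 \cup fi_1b_1db_2i_2$, and then checks for a representative of each of the two edge classes that deletion destroys this link and that contraction identifies a vertex of one linked cycle with a vertex of the other, so every one-step minor inherits a linkless embedding. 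Your structural idea of putting the damaged copy $0$-homologously in a ball and the intact copy in $\mathbb{R}P^2$ points toward constructing such genuine $\mathbb{R}P^3$ embeddings, but it establishes neither projective planarity nor apexness, which is what your written plan actually requires.
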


\begin{proof}
We will show that no proper minor of $P_{9B}\therefore P_{9B}$ is 
intrinsically linked in $\mathbb{R}P^3$.

\begin{figure}
	\begin{center}
		\includegraphics[scale=0.4]{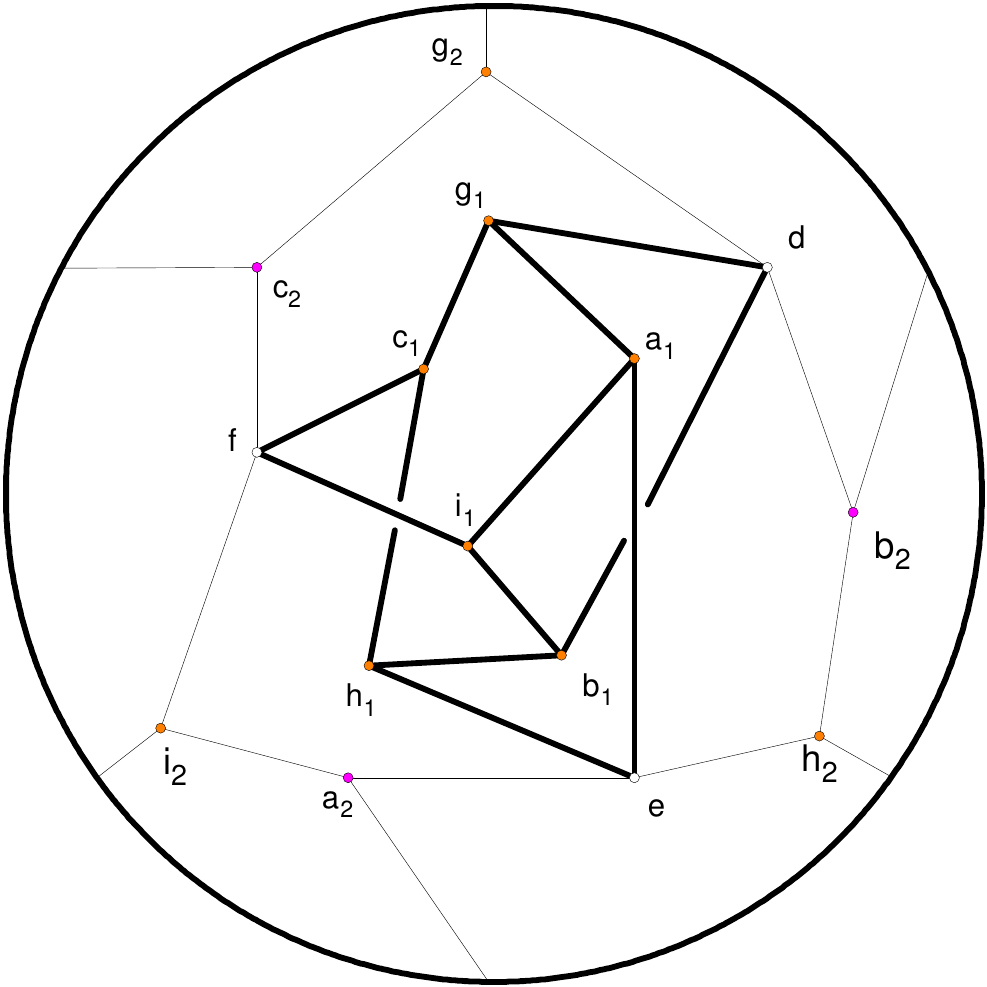}
		\caption{$P_{9B} \therefore P_{9B}$ connected along the marked vertices
			$d,e,f$. One copy of $P_{9B} \therefore$ is in bold.}
		\label{fig:P9BP9B}
	\end{center}
\end{figure}

Consider $P_{9B}\therefore P_{9B}$ as drawn in Figure \ref{fig:P9BP9B}.
The only pair of linked cycles in this embedding is $c_1h_1ea_1g_1$ and
$fi_1b_1db_2i_2$. There are two vertex equivalence classes in
$P_{9B}\therefore P_{9B}$: $\{d,e,f\}$ and
$V(P_{9B}\therefore P_{9B})\setminus \{d,e,f\}$.

To check for minor-minimality, it suffices to show that removing or contracting
any edge in $P_{9B} \therefore P_{9B}$ results in graph that is not
intrinsically linked in $\mathbb{R}P^3$.

There are two edge classes (up to graph isomorphism) that need to be considered.
Removing edge $(a_1,e)$ or edge $(c_1,h_1)$ from the embedding in Figure
\ref{fig:P9BP9B} results in a linkless embedding. Contracting edge $(f,c_1)$
or edge $(a_1,i_1)$ in Figure \ref{fig:P9BP9B} results in a linkless embedding
since the edge contractions send vertices on the (only) two linked cycles to
the same point, thus eliminating the non-trivial link.
\end{proof}

\section{Remarks}

Using the weaker definition of  unlinked components in Definition
\ref{defn:unlinked} allows the use of 1-homologous cycles to reduce the number
of crossings in a graph embedding in projective space. Thus, intrinsically
linked graphs in $\mathbb{R}P^3$ are more complex. Unlike in $\mathbb{R}^3$,
where there are simple arguments showing that there are no minor-minimal
intrinsically linked graphs with connectivity 0, 1, or 2, such graphs exist
in projective space. Using careful combinatorics, one can show that there are
21 disconnected graphs, 91 graphs with 1-connectivity, and 469 graphs with
2-connectivity which are minor-minimal intrinsically linked in $\mathbb{R}P^3$.
It is not too hard to see that $\triangle-Y$ exchanges preserve intrinsic
linking as in $\mathbb{R}^3$, so we predict that there are many more
minor-minimal intrinsically linked graphs than the ones we have observed in
this paper. In particular, the graphs obtained by removing two edges from
$K_7$ have a myriad of triangles in which we can perform a $\triangle-Y$
exchange, leading to more intrinsically linked graphs. Some of these graphs
have $K_{4,4}$ with an edge removed as a minor, but others have yet
to be explored fuly. It would be of interest
to see which of these are in fact minor-minimal intrinsically linked in
$\mathbb{R}P^3$.

\section{Acknowledgments}
This material is based upon work obtained by research groups at the 2007
and 2008 Research Experience for Undergraduates Program at SUNY Potsdam
and Clarskon University, advised by Joel Foisy and supported by the National
Science Foundation under Grant No. 0646847 and the National Security
Administration under Grant No. 42652.

\bibliography{rp3linkedgraphs}

\end{document}